\newtheorem{theorem}{Theorem}
\newtheorem{corollary}{Corollary}
\newtheorem{definition}{Definition}
\newtheorem{lemma}{Lemma}
\newtheorem{proposition}{Proposition}
\newtheorem{rmk}{ Remark}
\begin{document}
\title{Some nonexistence results for space-time fractional Schr{\"o}dinger equations without gauge invariance}

\author{Mokhtar Kirane\footnote{\noindent 
Department of Mathematics, Faculty of Arts and Science, Khalifa University, P.O. Box: 127788,  Abu Dhabi, UAE; mokhtar.kirane@ku.ac.ae
 \newline \indent $\,\,{}^{1}$  Department of Mathematics, Sultan Qaboos University,  FracDiff Research Group (DR/RG/03),  P.O. Box 46, Al-Khoud 123, Muscat, Oman; ahmad.fino01@gmail.com; a.fino@squ.edu.om}, Ahmad Z. Fino$^{\,1}$}

\date{}
\maketitle

\begin{abstract}
In this paper, we consider the Cauchy problem in $\mathbb{R}^N$, $N\geq1$, for semi-linear Schrödinger equations with space-time fractional derivatives. We discuss the nonexistence of global $L^1$ or $L^2$ weak solutions in the subcritical and critical cases under some conditions on the initial data and the nonlinear term. Furthermore, the nonexistence of local $L^1$ or $L^2$ weak solutions in the supercritical case are studied.
\end{abstract}

\maketitle

\noindent {\small {\bf MSC[2020]:} 35A01, 26A33} 

\noindent {\small {\bf Keywords:} Schr{\"o}dinger equations, Fractional derivatives and integrals, test function method, nonexistence of global solution}

\section{Introduction}

In this paper, we consider the problem 
\begin{equation}\label{1}
\left\{
\begin{array}{ll}
\,\,\displaystyle{i^\alpha\, {}^cD^\alpha_{0|t}u-(-\Delta)^{\beta/2} u=\lambda |u|^p,}&\displaystyle {(t,x)\in (0,T)\times\mathbb{R}^{N},}\\
\\
\displaystyle{u(x,0)=\varepsilon u_0(x)},&\displaystyle{x\in {\mathbb{R}^N}, }
\end{array}
\right. \end{equation}
where $u$ is a complex-valued unknown function of $(t,x)$, $0<\alpha<1$, $0<\beta<2$, $N\geq1$, $p>1$, $T>0$, $\varepsilon>0$, $\lambda\in\mathbb{C}\setminus\{0\}$, and $i^\alpha$ is the principal part of $i^\alpha$, i.e.
$$i^\alpha=\cos(\frac{\alpha\pi}{2})+i\cos(\frac{\alpha\pi}{2}),$$
${}^cD^\alpha_{0|t}$ is the Caputo fractional derivative and $(-\Delta)^{\beta/2}:L^2(\mathbb{R}^N)\rightarrow L^2(\mathbb{R}^N)$ is the fractional Laplacian which can be defined by a pointwise representation as given in Definition \ref{def1} below. \\

Different fractional generalizations of the Schr{\"o}dinger equation appeared in the literature: The classical Schr{\"o}dinger equations with nongauge power nonlinearity, i.e. \eqref{1} with $\alpha=1$ and $\beta=2$, has been studied by Ikeda and Wakasugi \cite{Ikeda1} and Ikeda and Inui \cite{Ikeda2,Ikeda3}, the spatial fractional
Schr{\"o}dinger  equation involving fractional order space derivatives,  i.e. \eqref{1} with $\alpha=1$ and $\beta\in(0,2)$, has been investigated in \cite{Fino1,Fino2,Laskin1,Laskin2,Laskin3},
the fractional temporal Schr{\"o}dinger  equation involving a fractional time derivative, i.e. $\alpha\in(0,1)$ and $\beta=2$, has been studied in \cite{Zhangsun,Naber,Narahari}, the semirelativistic Schr{\"o}dinger equation with nongauge invariant power nonlinearity, i.e. \eqref{1} with $\alpha=1$ and $\beta=1/2$, got interest by Fujiwara \cite{Fujiwara1}, Inui \cite{Inui}, Fujiwara and Ozawa \cite{Fujiwara2}, and the spatio-temporal fractional Schr{\"o}dinger equation with both
time and space fractional derivatives attracted the attention of \cite{Dong,Saxena}.\\

The expected critical exponent can be determined by the following scaling argument: If $u(x,t)$ is a solution of (\ref{1}) with initial data $u_0$, then 
$$v(t,x)=\gamma^{\frac{\beta}{p-1}}u(\gamma^{\beta/\alpha} t,\gamma x),$$
 for all $\gamma>0$, is also a solution of (\ref{1}) with initial data $v_0(x)=v(0,x)=\gamma^{\frac{\beta}{p-1}}u_0(\gamma x)$, for all $x\in\mathbb{R}^N$. We choose $p=p_s$ such that we get an invariant $H^s$-norm of the initial data:
$$\left\|v_0\right\|_{H^{s}}=\gamma^{\frac{\beta}{p-1}-\frac{N-2s}{2}}\|u_0\|_{H^{s}}=\|u_0\|_{H^{s}};$$
this happens if and only if 
$$p=p_s=1+\frac{2\beta}{N-2s}.$$
Therefore, the case $p=p_s$ is called $H^s$-critical case; the case $p<p_s$ (resp. $p>p_s$) is called $H^s$-subcritical case (resp. $H^s$-supercritical case). On the other hand, the Fujita critical exponent for the corresponding heat equation with fractional Laplacian is
$$p_F=1+\frac{\beta}{N}.$$
Our main goal is to study the nonexistence of $L^1$ or $L^2$ global weak solutions under the condition $p\leq p_s$ or $p\leq p_F$ as well as the nonexistence of $L^1$ or $L^2$ local weak solutions under the condition that $p> p_s$ or $p> p_F$ (see e.g. \cite{Ikeda3}), using the test function method (see e.g. \cite{Zhang}) or a fractional differential equation approach (i.e. construct a fractional differential equation for a new function and using comparison principle).  The local existence for \eqref{1} is expected in the $H^s$-subcritical case, but this is not our case. We refer the reader to \cite[Appendix]{Ikeda1} by using the Strichartz estimates recently studied by Lee \cite{Lee}.\\
 Let
$$X_T=\{\varphi\in C([0,\infty),H^\beta(\mathbb{R}^N))\cap C^1([0,\infty),L^2(\mathbb{R}^N)), \hbox{such that supp$\varphi\subset Q_T$, $\varphi$ is $\mathbb{R}$-valued}\},$$
and
$$Y_T=\{\varphi\in C([0,\infty),H^\beta(\mathbb{R}^N))\cap C^1([0,\infty),L^\infty(\mathbb{R}^N)), \hbox{such that supp$\varphi\subset Q_T$, $\varphi$ is $\mathbb{R}$-valued}\},$$
where $Q_T:=[0,T]\times\mathbb{R}^N$ and the fractional Sobolev space $H^\beta(\mathbb{R}^N)$ is defined by
$$H^\beta(\mathbb{R}^N)=\{u\in L^2(\mathbb{R}^N); (-\Delta)^{\beta/2}u\in L^2(\mathbb{R}^N)\}.$$

\begin{definition}[$L^2$-weak solution]\label{definitionweak}
Let $u_0\in L^2(\mathbb{R}^N)$ and $T>0.$ We say that $u$ is an
$L^2$-weak solution of \eqref{1} if 
$$u\in L^1((0,T),L^{2}(\mathbb{R}^N))\cap L^p((0,T),L^{2p}(\mathbb{R}^N)),$$
and 
\begin{eqnarray}\label{weaksolution}
&{}&\lambda\int_{Q_T}|u|^p\varphi(t,x)\,dt\,dx+\varepsilon\,i^\alpha\int_{Q_T}u_0(x){}^cD^\alpha_{t|T}\varphi(t,x)\,dt\,dx\nonumber\\
&{}&=\,i^\alpha\int_{Q_T}u\,{}^cD^\alpha_{t|T}\varphi\,dt\,dx-\int_{Q_T}u(-\Delta)^{\beta/2}\varphi(t,x)\,dt\,dx,
\end{eqnarray}
holds for all $\varphi\in X_T$. We denote the lifespan for the $L^2$-weak solution by
$$T_w(\varepsilon):=\sup\{T\in(0,\infty];\,\,\hbox{there exists a unique $L^2$-weak solution u to \eqref{1}}\}.$$
Moreover, if $T>0$ can be arbitrary chosen, i.e. $T_w(\varepsilon)=\infty$, then $u$ is called a global $L^2$-weak solution of \eqref{1}.
\end{definition}

\begin{definition}[$L^1$-weak solution]\label{L1weak}
Let $u_0\in L^1(\mathbb{R}^N)$ and $T>0.$ We say that $u$ is an
$L^1$-weak solution of \eqref{1} if $u,|u|^p\in L^1((0,T),L^1(\mathbb{R}^N))$
and
\begin{eqnarray}\label{weaksolution1}
&{}&\lambda\int_{Q_T}|u|^p\varphi(t,x)\,dt\,dx+\varepsilon\,i^\alpha\int_{Q_T}u_0(x){}^cD^\alpha_{t|T}\varphi(t,x)\,dt\,dx\nonumber\\
&{}&=\,i^\alpha\int_{Q_T}u\,{}^cD^\alpha_{t|T}\varphi\,dt\,dx-\int_{Q_T}u(-\Delta)^{\beta/2}\varphi(t,x)\,dt\,dx,
\end{eqnarray}
holds for all $\varphi\in Y_T$. We denote the lifespan for the $L^1$-weak solution by
$$\overline{T}_w(\varepsilon):=\sup\{T\in(0,\infty];\,\,\hbox{there exists a unique $L^1$-weak solution u to \eqref{1}}\}.$$
Moreover, if $T>0$ can be arbitrary chosen, i.e. $\overline{T}_w(\varepsilon)=\infty$, then $u$ is called a global $L^1$-weak solution to \eqref{1}.
\end{definition}


\section{Preliminaries}\label{sec2}

\begin{definition} [Absolutely continuous functions] \cite[Chapter~1]{SKM}\\
A function $g:[a,b]\rightarrow\mathbb{R}$ with $a,b\in\mathbb{R}$, is absolutely continuous if and only if there exists a Lebesgue summable function $\psi\in L^1(a,b)$ such that 
$$g(t)=g(a)+\int_{a}^t\psi(s)\,ds,\qquad\hbox{for all}\,\, t\in[a,b].$$ 
The space of these functions is denoted by $AC[a,b]$. 
\end{definition}

\begin{definition}[Riemann-Liouville fractional integrals] \cite[Chapter~1]{SKM}\\
Let $g\in L^1(0,T)$ with $T>0$. The Riemann-Liouville left- and right-sided fractional integrals of order $\sigma\in(0,1)$ are, respectively, defined by
$$
I^\sigma_{0|t}g(t):=\frac{1}{\Gamma(\sigma)}\int_{0}^t(t-s)^{-(1-\sigma)}g(s)\,ds, \quad t>0,
$$
and
$$
I^\sigma_{t|T}g(t):=\frac{1}{\Gamma(\sigma)}\int_t^{T}(s-t)^{-(1-\sigma)}g(s)\,ds, \quad t<T,
$$
where $\Gamma$ is the Euler gamma function.
\end{definition}

\begin{definition}[Caputo fractional derivatives] \cite[Chapter~1]{SKM}\\
Let $f\in AC[0,T]$ with $T>0$. The Caputo left- and right-sided fractional derivatives of order $\delta\in(0,1)$ exists almost everywhere on $[0,T]$ and defined, respectively, by
$$
{}^cD^\delta_{0|t}f(t):=\frac{d}{dt}I^{1-\delta}_{0|t}[f(t)-f(0)]=I^{1-\delta}_{0|t}[f^\prime(t)], \quad t>0,
$$
and
$$
{}^cD^\delta_{t|T}f(t):=-\frac{d}{dt}I^{1-\delta}_{t|T}[f(t)-f(T)]=-I^{1-\delta}_{t|T}[f^\prime(t)], \quad t<T.
$$
\end{definition}

\begin{lemma}\cite[Lemma~2.22, p. 96]{KSTr}\\
Let $0<\delta<1$ and $T>0$. If $f\in AC[0,T]$ or $f\in C^1[0,T]$, then 
\begin{equation}\label{28}
  I^\delta_{0|t}\, {}^cD^\delta_{0|t}f(t)=f(t)-f(0).
\end{equation}
\end{lemma}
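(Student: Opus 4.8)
The plan is to reduce the composition to the semigroup (index) law for Riemann--Liouville integrals and then invoke the fundamental theorem of calculus. The starting point is the second representation of the Caputo derivative furnished by its very definition, namely ${}^cD^\delta_{0|t}f(t)=I^{1-\delta}_{0|t}[f'(t)]$, which is legitimate because $f\in AC[0,T]$ (a space containing $C^1[0,T]$) guarantees that $f'$ exists almost everywhere and belongs to $L^1(0,T)$. Substituting this into the left-hand side of \eqref{28} gives
$$I^\delta_{0|t}\,{}^cD^\delta_{0|t}f(t)=I^\delta_{0|t}\big(I^{1-\delta}_{0|t}[f']\big)(t).$$

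The heart of the argument, and the step I expect to be the main obstacle, is the index law $I^\sigma_{0|t}I^\tau_{0|t}g=I^{\sigma+\tau}_{0|t}g$ valid for $g\in L^1(0,T)$ and $\sigma,\tau>0$. To prove it I would write both integrals explicitly and interchange the order of integration by Fubini's theorem; this interchange is justified since $g\in L^1$ and the double kernel $(t-s)^{\sigma-1}(s-r)^{\tau-1}$ is integrable over the triangle $\{0\le r\le s\le t\}$. The inner integral in $s$ is then evaluated via the substitution $s=r+(t-r)\xi$, which turns it into $(t-r)^{\sigma+\tau-1}B(\tau,\sigma)$, where $B$ is the Euler Beta function. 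Using $B(\tau,\sigma)=\Gamma(\tau)\Gamma(\sigma)/\Gamma(\sigma+\tau)$, the leading constants collapse to $1/\Gamma(\sigma+\tau)$, yielding exactly $I^{\sigma+\tau}_{0|t}g$.

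Applying this index law with $\sigma=\delta$, $\tau=1-\delta$, and $g=f'$ reduces the composition to $I^{1}_{0|t}[f']$, i.e. to ordinary integration of $f'$. The final step is then to recognize that
$$I^1_{0|t}[f'](t)=\int_0^t f'(s)\,ds=f(t)-f(0),$$
where the last equality is the fundamental theorem of calculus for absolutely continuous functions (for $f\in C^1[0,T]$ this is the elementary version). This establishes \eqref{28}. The only genuinely delicate points are the Fubini interchange and the Beta-function evaluation underpinning the index law; everything else is bookkeeping with the definitions supplied above.
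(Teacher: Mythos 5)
Your proof is correct and follows essentially the same route as the source the paper cites for this lemma (the paper gives no proof of its own, referring to \cite[Lemma~2.22]{KSTr}): the representation ${}^cD^\delta_{0|t}f=I^{1-\delta}_{0|t}[f']$, which is exactly the definition adopted in Section~\ref{sec2}, followed by the semigroup law $I^{\delta}_{0|t}I^{1-\delta}_{0|t}=I^{1}_{0|t}$ via Fubini and the Beta integral, and the fundamental theorem of calculus for $AC$ functions. Your Fubini step is in fact especially clean here because $\sigma+\tau=1$, so the inner integral equals $B(1-\delta,\delta)$ exactly and the double integral of $|f'(r)|(t-s)^{\delta-1}(s-r)^{-\delta}$ converges absolutely for every $t\in[0,T]$, giving the identity pointwise rather than merely almost everywhere.
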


Given $T>0$, let us define the function $w: [0,T] \to \mathbb{R}$ by the following formula:
\begin{equation}\label{w}
\displaystyle w(t)=\left(1-t/T\right)^{\eta} \quad\hbox{for all}\,\,\,0\leq t\leq T,
\end{equation}
where $\eta \gg1$. Later on, we need the following properties concerning the function $w$.

\begin{lemma}\cite[Property~2.16, p.95]{KSTr}\label{lemma1}\\
Let $T>0$, $\eta>\alpha -1$, and $0<\alpha<1$. For all $t\in[0,T]$, we have
\begin{equation}\label{6}
{}^cD_{t|T}^{\alpha}
w(t)=\frac{\Gamma(\eta+1)}{
\Gamma(\eta+1-\alpha)}T^{-\alpha}(1-t/T)^{\eta-\alpha}.
\end{equation}
\end{lemma}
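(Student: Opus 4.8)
The plan is to compute the right-sided Caputo derivative directly from its definition, in the form that avoids differentiating $w$ itself. Since $w(T)=(1-T/T)^\eta=0$, the definition gives
\[
{}^cD^\alpha_{t|T}w(t)=-\frac{d}{dt}I^{1-\alpha}_{t|T}[w(t)-w(T)]=-\frac{d}{dt}I^{1-\alpha}_{t|T}w(t).
\]
I deliberately prefer this form over the equivalent $-I^{1-\alpha}_{t|T}[w']$, because the latter would require $w'\in L^1(0,T)$, i.e. $\eta>0$, whereas the chosen form only needs the fractional integral to converge, which is compatible with the weaker hypothesis $\eta>\alpha-1$.

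First I would write out the Riemann--Liouville right integral explicitly,
\[
I^{1-\alpha}_{t|T}w(t)=\frac{1}{\Gamma(1-\alpha)}\int_t^T(s-t)^{-\alpha}(1-s/T)^{\eta}\,ds,
\]
and reduce it to a Beta integral by the affine substitution $s=t+r(T-t)$ with $r\in[0,1]$. Under this change one has $s-t=r(T-t)$ and $1-s/T=(T-t)(1-r)/T$, so all $s$-dependence factors into $r^{-\alpha}(1-r)^{\eta}$ times fixed powers of $(T-t)$ and $T$. Collecting exponents turns the integral into $(T-t)^{\eta+1-\alpha}T^{-\eta}\int_0^1 r^{-\alpha}(1-r)^{\eta}\,dr$, and the remaining integral is $B(1-\alpha,\eta+1)=\Gamma(1-\alpha)\Gamma(\eta+1)/\Gamma(\eta+2-\alpha)$.

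Here is the one place that requires care: the Beta integral converges precisely when $1-\alpha>0$ and $\eta+1>0$. The first holds since $\alpha\in(0,1)$, and the second follows from the hypothesis, because $\eta>\alpha-1>-1$; this is exactly why $\eta>\alpha-1$ is the natural assumption. This yields the closed form
\[
I^{1-\alpha}_{t|T}w(t)=\frac{\Gamma(\eta+1)}{\Gamma(\eta+2-\alpha)}\,T^{-\eta}(T-t)^{\eta+1-\alpha}.
\]

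Finally I would differentiate this explicit expression in $t$ and multiply by $-1$. Since $\frac{d}{dt}(T-t)^{\eta+1-\alpha}=-(\eta+1-\alpha)(T-t)^{\eta-\alpha}$, a factor $(\eta+1-\alpha)$ appears, which I would absorb through the functional equation $\Gamma(\eta+2-\alpha)=(\eta+1-\alpha)\Gamma(\eta+1-\alpha)$. After cancellation, and rewriting $T^{-\eta}(T-t)^{\eta-\alpha}=T^{-\alpha}(1-t/T)^{\eta-\alpha}$, this produces exactly the claimed formula. No interchange of a limit and an integral is needed, since the integral is evaluated in closed form \emph{before} the differentiation is carried out, so the only genuine analytic subtlety is the convergence condition already dealt with above.
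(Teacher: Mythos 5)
Your computation is correct, and there is nothing in the paper to compare it against in detail: the paper gives no proof of this lemma at all, quoting it directly from \cite[Property~2.16]{KSTr}. Your reduction of $I^{1-\alpha}_{t|T}w$ to a Beta integral via the affine substitution $s=t+r(T-t)$, followed by differentiation of the resulting closed form, is exactly the standard derivation behind the cited property, so your proposal fills the citation with the expected argument and all the exponent bookkeeping checks out.

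One caveat concerning your side remark: the opening step $w(T)=0$, which lets you write ${}^cD^\alpha_{t|T}w=-\frac{d}{dt}I^{1-\alpha}_{t|T}w$, is valid only for $\eta>0$, not for all $\eta>\alpha-1$ as you suggest when you argue that your chosen form is ``compatible with the weaker hypothesis.'' At $\eta=0$ one has $w\equiv 1$, so the Caputo derivative on the left of the claimed formula vanishes identically while the right-hand side equals $(T-t)^{-\alpha}/\Gamma(1-\alpha)\neq 0$; and for $\eta\in(\alpha-1,0)$ the function $w$ is unbounded at $t=T$, hence not in $AC[0,T]$, so the paper's definition of the Caputo derivative does not even apply. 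What your calculation of $-\frac{d}{dt}I^{1-\alpha}_{t|T}w$ literally establishes on the full range $\eta>\alpha-1$ is the Riemann--Liouville version of the identity (which is the form actually stated in \cite{KSTr}); the Caputo and Riemann--Liouville derivatives of $w$ coincide precisely because $w(T)=0$, i.e.\ precisely when $\eta>0$. Since the paper fixes $\eta\gg 1$ in the definition of $w$, this edge case never arises in any application, but to be accurate you should either add the hypothesis $\eta>0$ for the Caputo reading or note explicitly that for $\eta\leq 0$ your computation proves only the Riemann--Liouville statement.
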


\begin{lemma}\label{lemma2}
Let $T>0$, $0<\alpha<1$, $\eta>\alpha p/(p-1)-1$, and $p>1$. Then, we have
\begin{equation}\label{8}
\int_0^T (w(t))^{-\frac{1}{p-1}}|{}^cD_{t|T}^{\alpha}
w(t)|^{\frac{p}{p-1}}\,dt=C_1\,T^{1-\alpha\frac{p}{p-1}},
\end{equation}
and
\begin{equation}\label{9}
\int_0^T {}^cD_{t|T}^{\alpha}
w(t)\,dt=C_2\,T^{1-\alpha},
\end{equation}
where
$$C_1=\frac{1}{\eta+1-\alpha\frac{ p}{p-1}}\left[ \frac{\Gamma(\eta+1)}{\Gamma(\eta+1-\alpha)}  \right]^{\frac{p}{p-1}},\quad\hbox{and}\quad C_2= \frac{\Gamma(\eta+1)}{\Gamma(\eta+2-\alpha)}.$$

\end{lemma}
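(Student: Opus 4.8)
The plan is to substitute the explicit formula \eqref{6} for the right-sided Caputo derivative of $w$ supplied by Lemma \ref{lemma1} directly into both integrands, and then reduce each resulting integral to an elementary power integral via the single change of variable $s=1-t/T$, with Jacobian $dt=-T\,ds$. Before integrating I would record one sign observation that disposes of the absolute value in \eqref{8}: since $p>1$ gives $p/(p-1)>1$, the hypothesis $\eta>\alpha p/(p-1)-1$ forces $\eta+1-\alpha>0$, so the prefactor $\Gamma(\eta+1)/\Gamma(\eta+1-\alpha)$ in \eqref{6} is positive; as $(1-t/T)^{\eta-\alpha}\geq 0$ on $[0,T]$, the quantity ${}^cD_{t|T}^{\alpha}w(t)$ is nonnegative and the modulus may be dropped.

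For \eqref{9} I would insert \eqref{6}, pull the constant $T^{-\alpha}\Gamma(\eta+1)/\Gamma(\eta+1-\alpha)$ outside, and evaluate $\int_0^T(1-t/T)^{\eta-\alpha}\,dt=T\int_0^1 s^{\eta-\alpha}\,ds=T/(\eta+1-\alpha)$, where convergence needs $\eta-\alpha>-1$, already guaranteed above. Collecting factors gives $\frac{\Gamma(\eta+1)}{(\eta+1-\alpha)\Gamma(\eta+1-\alpha)}\,T^{1-\alpha}$, and the functional equation $\Gamma(\eta+2-\alpha)=(\eta+1-\alpha)\Gamma(\eta+1-\alpha)$ collapses this to $C_2\,T^{1-\alpha}$, as claimed.

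For \eqref{8} the only genuine work is the exponent bookkeeping. I would write $(w(t))^{-1/(p-1)}=(1-t/T)^{-\eta/(p-1)}$ and raise \eqref{6} to the power $p/(p-1)$, so the two powers of $(1-t/T)$ combine with total exponent $-\eta/(p-1)+(\eta-\alpha)p/(p-1)=\eta-\alpha p/(p-1)$. The same substitution then gives $\int_0^1 s^{\eta-\alpha p/(p-1)}\,ds=1/(\eta+1-\alpha p/(p-1))$, whose finiteness is exactly the stated lower bound on $\eta$. Multiplying by the extracted constants $[\Gamma(\eta+1)/\Gamma(\eta+1-\alpha)]^{p/(p-1)}\,T^{-\alpha p/(p-1)}$ and the Jacobian factor $T$ produces $C_1\,T^{1-\alpha p/(p-1)}$.

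The computation presents no serious obstacle; the points to watch are the exponent arithmetic in \eqref{8}, where a sign slip in combining $-\eta/(p-1)$ with $(\eta-\alpha)p/(p-1)$ would destroy the clean simplification, and the verification that the resulting power of $(1-t/T)$ strictly exceeds $-1$ so the integral converges. Both are controlled precisely by the hypothesis $\eta>\alpha p/(p-1)-1$, which is why that bound appears in the statement.
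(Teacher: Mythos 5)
Your proof is correct and takes essentially the same route as the paper's: substitute the explicit formula from Lemma \ref{lemma1} for ${}^cD_{t|T}^{\alpha}w$, combine the powers of $(1-t/T)$ (the exponent arithmetic $-\eta/(p-1)+(\eta-\alpha)p/(p-1)=\eta-\alpha p/(p-1)$ matches the paper's computation exactly), and evaluate the resulting elementary integral by a linear change of variable, with convergence guaranteed by $\eta>\alpha p/(p-1)-1$. Your two additions --- the positivity observation that lets you drop the modulus in \eqref{8}, and the explicit evaluation of \eqref{9} via $\Gamma(\eta+2-\alpha)=(\eta+1-\alpha)\Gamma(\eta+1-\alpha)$, which the paper dismisses with ``Similarly'' --- only make the argument more complete than the original.
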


\begin{proof}
Let we start by proving \eqref{8}. Using Lemma \ref{lemma1}, we have
\begin{align*}
\int_{0}^T (w(t))^{-\frac{1}{p-1}}|{}^cD_{t|T}^{\alpha}w(t)|^{\frac{p}{p-1}}\,dt &= \left[ \frac{\Gamma(\eta+1)}{\Gamma(\eta+1-\alpha)}  \right]^{\frac{p}{p-1}}\,T^{-\alpha\frac{p}{p-1}}\int_{0}^T (w(t))^{-\frac{1}{p-1}}(w(t))^{\frac{p({\eta}-\alpha)}{(p-1)\eta}}\,dt\\
&= \left[ \frac{\Gamma(\eta+1)}{\Gamma(\eta+1-\alpha)}  \right]^{\frac{p}{p-1}}\,T^{-\alpha\frac{p}{p-1}}\int_{0}^T(1-t/T)^{\eta-\alpha\frac{ p}{p-1}}\,dt\\
&= \left[ \frac{\Gamma(\eta+1)}{\Gamma(\eta+1-\alpha)}  \right]^{\frac{p}{p-1}}\,T^{1-\alpha\frac{p}{p-1}}\int_{0}^1(1-s)^{\eta-\alpha\frac{ p}{p-1}}\,ds\\
&= C_1\,T^{1-\alpha\frac{p}{p-1}}.
\end{align*}
Similarly, we get \eqref{9}.
\end{proof}

\begin{lemma}\label{lemma6}
Let $T>0$, $0<\alpha<1$, $p>1$, $A,B\geq 0$, and $v\in C^1([0,T),\mathbb{R})$ satisfying the following fractional differential inequality
\begin{equation}\label{27}
 {}^cD_{0|t}^{\alpha}v(t)\geq B\left[|v(t)|^p-A\right], \quad t\in[0,T).
\end{equation}
subject to $v(0)> A^{\frac{1}{p}}$. Then $v(t)\geq A^{\frac{1}{p}}$ for all $t\in[0,T)$.
\end{lemma}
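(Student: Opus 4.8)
The plan is to argue by contradiction, using a first-crossing-time argument together with the fractional integration identity \eqref{28}. Suppose the conclusion fails, so that the set $S=\{t\in[0,T):\,v(t)\le A^{\frac1p}\}$ is nonempty, and set $t_0=\inf S$. Since $v(0)>A^{\frac1p}$ and $v$ is continuous (being $C^1$), we have $t_0>0$; moreover $v(t)>A^{\frac1p}$ for all $t\in[0,t_0)$, and by continuity $v(t_0)=A^{\frac1p}$. (The equality $v(t_0)=A^{\frac1p}$ follows because every left neighborhood value satisfies $v>A^{\frac1p}$, giving $v(t_0)\ge A^{\frac1p}$, while $t_0\in\overline{S}$ forces $v(t_0)\le A^{\frac1p}$.)

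Next I would exploit the sign of the right-hand side of \eqref{27} on $[0,t_0)$. Since $A\ge 0$, on this interval we have $v(t)>A^{\frac1p}\ge 0$, hence $|v(t)|^p=v(t)^p>(A^{\frac1p})^p=A$, so that $B\left[|v(t)|^p-A\right]\ge 0$ and therefore ${}^cD^\alpha_{0|t}v(t)\ge 0$ for all $t\in[0,t_0)$. Applying \eqref{28} on $[0,t_0]$ (legitimate because $v\in C^1([0,T))\subset C^1[0,t_0]$) and evaluating at $t=t_0$ gives
\[
v(t_0)-v(0)=\left.I^\alpha_{0|t}\,{}^cD^\alpha_{0|t}v(t)\right|_{t=t_0}=\frac{1}{\Gamma(\alpha)}\int_0^{t_0}(t_0-s)^{\alpha-1}\,{}^cD^\alpha_{0|s}v(s)\,ds.
\]
The kernel $(t_0-s)^{\alpha-1}$ is positive and the Caputo derivative is nonnegative on $[0,t_0)$, so the integral is nonnegative; thus $v(t_0)\ge v(0)>A^{\frac1p}$, which contradicts $v(t_0)=A^{\frac1p}$. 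Hence $S=\emptyset$, i.e.\ $v(t)>A^{\frac1p}$ for all $t\in[0,T)$, which in particular yields the claimed $v(t)\ge A^{\frac1p}$.

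I expect the main obstacle to be the bookkeeping around the first-crossing time $t_0$: one must verify carefully, from continuity alone, both that $t_0>0$ and that $v(t_0)=A^{\frac1p}$ exactly, and then that the Caputo derivative inherits the correct sign not merely at isolated points but a.e.\ on all of $[0,t_0)$ (this is immediate here since \eqref{27} is a pointwise inequality and $v^p>A$ holds throughout the open interval). The other point deserving care is that the identity \eqref{28} is being invoked on the subinterval $[0,t_0]$ rather than $[0,T]$, which is permissible precisely because $t_0<T$ and $v$ is $C^1$ there; no further regularity is needed, so once these sign and continuity facts are in place the contradiction is immediate.
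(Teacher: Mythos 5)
Your proposal is correct and follows essentially the same route as the paper: a first-crossing-time argument ($t_0=\inf S$, with $v(t_0)=A^{1/p}$ and $v>A^{1/p}$ before it), nonnegativity of the right-hand side of \eqref{27} on $[0,t_0]$, and an application of the identity \eqref{28} with the positive kernel $(t_0-s)^{\alpha-1}$ to force $v(t_0)\ge v(0)>A^{1/p}$, a contradiction. The only cosmetic difference is that the paper first fixes an arbitrary $T_1\in(0,T)$ before locating the crossing time, whereas you work directly with $t_0<T$; both handle the endpoint regularity issue equally well.
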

\begin{proof} Fixing $T_1\in(0,T)$, we show that $v(t)\geq A^{\frac{1}{p}}$ for any $t\in(0,T_1]$. Then, since $T_1$ is arbitrary, the claim follows. Let us start by defining $T^*=\inf\{t>0;\,\, v(t)\geq A^{\frac{1}{p}}\}$. Since $v$ is continuous and $v(0)>A^{\frac{1}{p}}$, we have $T^*>0$. We claim $T^*=T_1$. Otherwise, we have $v(t)>A^{\frac{1}{p}}$ for all $t\in(0,T^*)$ such that $v(T^*)=A^{\frac{1}{p}}$; this implies, in particular, that
\begin{equation}\label{29}
F(t,v(t)):=B\left[|v(t)|^p-A\right]\geq 0,\qquad\text{for all}\,\, t\in[0,T^*].
\end{equation}
 On the other hand, since the right hand side of \eqref{27} is continuous on $[0,T_1]$ and $v\in C^1([0,T_1])$, applying the Riemann-Liouville  fractional integral $I^\alpha_{0|T^*}$ to \eqref{27} on $[0,T_1]$ and using \eqref{28}, we get
$$A^{\frac{1}{p}}=v(T^*)=v(0)+\frac{1}{\Gamma(\alpha)}\int_{0}^{T^*}(T^*-s)^{-(1-\alpha)}F(s,v(s))\,ds\geq v(0)>A^{\frac{1}{p}},$$
where we have used \eqref{29}; contradiction. This completes the proof.
\end{proof}

Using \cite[Proposition~4.6]{LIU} and applying the same argument as in the proof of Lemma \ref{lemma6}, one can define the function $g\in C([0,T_b),\mathbb{R}^+)$ which is the unique solution of 
\begin{equation}\label{30}
 \left\{
 \begin{array}{ll}
 {}^cD_{0|t}^{\alpha}g(t)=B\, g^p(t),& \quad t\in[0,T_b),\\\\
 g(0)>0,&\\
 \end{array}
 \right.
\end{equation}
 where $T_b$ is the maximal time of existence. 
\begin{proposition}[Fractional differential inequalities]\label{prop1}${}$\\
 Let $T_b>0$ be the blow-time of the solution of \eqref{30}, and let $T>T_b$, $0<\alpha<1$, $p>1$, $B>0$, and $f\in C^1([0,T),\mathbb{R})$ be a nonnegative solution of the following fractional differential inequality
\begin{equation}\label{57}
 \left\{
 \begin{array}{ll}
 {}^cD_{0|t}^{\alpha}f(t)\geq B\, f^p(t),& \quad t\in[0,T),\\\\
 f(0)>0.&\\
 \end{array}
 \right.
\end{equation}
Then  $f$ blows up at $T_b$, i.e. $\lim_{t\to T_b^{-}}f(t)=+\infty$. Moreover, the following upper and lower bound of $T_b$ are also given
 \begin{equation}\label{39}
  T_L\leq T_b\leq T_U,
  \end{equation}
 where
 $$T_U:=\left(\frac{\Gamma(1+\alpha)}{B\,(f(0))^{p-1}H(p,\alpha)}\right)^{1/\alpha}\qquad \hbox{and}\qquad T_L:=\left(\frac{\Gamma(1+\alpha)}{B\,(f(0))^{p-1}G(p)}\right)^{1/\alpha},$$
with
 \begin{equation}\label{40}
 G(p)=\min\left(2^p,\frac{p^p}{(p-1)^{p-1}}\right),\qquad H(p,\alpha)=\max(p-1,2^{-\frac{p\alpha}{p-1}}).
   \end{equation}
\end{proposition}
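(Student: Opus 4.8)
The plan is to reduce everything to the scalar Volterra equation behind \eqref{30} and to exploit comparison together with two complementary one-variable estimates. First I would apply the Riemann--Liouville integral $I^\alpha_{0|t}$ to \eqref{57} and use \eqref{28} to rewrite the inequality in integral form
\[
f(t)\ge f(0)+\frac{B}{\Gamma(\alpha)}\int_0^t (t-s)^{\alpha-1}f(s)^p\,ds ,
\]
and likewise to write the equation for $g$ (chosen with $g(0)=f(0)$) with equality. Because the kernel $(t-s)^{\alpha-1}$ is nonnegative and $x\mapsto x^p$ is nondecreasing on $[0,\infty)$, $g$ is positive and nondecreasing, and the same monotone-iteration/comparison argument used in Lemma \ref{lemma6} shows that a supersolution dominates the solution, i.e. $f(t)\ge g(t)$ on their common interval of existence. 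Since $g$ blows up at $T_b$, the bound $f\ge g$ forces $\lim_{t\to T_b^-}f(t)=+\infty$; this proves the blow-up assertion and reduces the two-sided estimate \eqref{39} to bounding the blow-up time $T_b$ of $g$ alone.

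For the upper bound $T_b\le T_U$ the main tool is the convexity (fractional chain-rule) inequality ${}^cD^\alpha_{0|t}\Phi(g)\le \Phi'(g)\,{}^cD^\alpha_{0|t}g$, valid for convex $\Phi\in C^1$. Applying it with $\Phi(x)=x^{1-p}$ (convex on $(0,\infty)$ and legitimate since $g\ge g(0)>0$) and using ${}^cD^\alpha g=Bg^p$ gives ${}^cD^\alpha_{0|t}\big(g^{1-p}\big)\le -B(p-1)$. Applying $I^\alpha_{0|t}$ and \eqref{28} then yields $g(t)^{1-p}\le g(0)^{1-p}-\frac{B(p-1)}{\Gamma(1+\alpha)}t^\alpha$; since the left-hand side is positive, $g$ must cease to exist before the right-hand side vanishes, which produces exactly the bound with $H=p-1$. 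The second value $H=2^{-p\alpha/(p-1)}$ (sharper for small $\alpha$) I would obtain by a complementary estimate: comparison with an explicit power-type subsolution, or a dyadic (interval-halving) lower bound $g(t)\ge \frac{B\,t^\alpha}{2^\alpha\Gamma(1+\alpha)}\,g(t/2)^p$ iterated along $t,t/2,t/4,\dots$; taking the better of the two constants gives the $\max$ in $H(p,\alpha)$.

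For the lower bound $T_b\ge T_L$ I would use monotonicity directly in the integral equation: since $g(s)\le g(t)$ for $s\le t$,
\[
g(t)\le g(0)+\frac{B\,t^\alpha}{\Gamma(1+\alpha)}\,g(t)^p .
\]
Writing $c=\frac{B t^\alpha}{\Gamma(1+\alpha)}$ and $X=g(t)$, this says $F(X):=cX^p-X+g(0)\ge0$. The function $F$ has a bounded (lower) zero $X_-$ precisely while $\min_{X>0}F<0$, i.e. while $c<\frac{(p-1)^{p-1}}{p^p g(0)^{p-1}}$; a continuity argument confines $g(t)$ to $[0,X_-]$ on that range, so $g$ cannot blow up until $c$ reaches this threshold, which is exactly the bound with $G=p^p/(p-1)^{p-1}$. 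The alternative value $G=2^p$ follows from the cruder bootstrap that $g$ cannot reach $2g(0)$ before $\frac{B t^\alpha}{\Gamma(1+\alpha)}=2^{-p}g(0)^{1-p}$; taking the better constant gives the $\min$ in $G(p)$.

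The main obstacle is the fractional-calculus bookkeeping rather than the algebra. Two points need care: (i) the comparison principle for Caputo inequalities, where nonlocality prevents a naive pointwise argument and forces one to work through the positivity-preserving Volterra formulation (or to repeat the Lemma \ref{lemma6} contradiction scheme); and (ii) the rigorous justification of the chain-rule inequality for $\Phi(x)=x^{1-p}$, which requires $g$ to stay bounded away from $0$ (true here) and $\Phi(g)\in C^1$ up to $T_b$. Finally, one must check in each case which of the two constants in \eqref{40} is the sharper one in the relevant parameter range, so that the $\min$/$\max$ in $G$ and $H$ are the genuinely optimal outcomes of the two methods.
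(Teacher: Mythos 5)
Your route is genuinely different from the paper's: the paper disposes of this proposition in three lines by citation --- monotonicity and blow-up of the solution $g$ of \eqref{30} from \cite[Theorem~5.1]{Feng}, the comparison $f\ge g$ from \cite[Theorem~4.10]{LIU}, and the two-sided bounds \eqref{39} from \cite[Theorem~5.2]{Feng} --- whereas you rebuild everything from the Volterra formulation. Most of your plan is sound and would make the result self-contained: the monotone-iteration comparison in integral form does yield $f\ge g$ (iterate $g_{n+1}=g(0)+\frac{B}{\Gamma(\alpha)}\int_0^t(t-s)^{\alpha-1}g_n^p\,ds$ from $g_0\equiv g(0)\le f$ and pass to the minimal solution); the convexity inequality ${}^cD^\alpha_{0|t}\Phi(g)\le \Phi'(g)\,{}^cD^\alpha_{0|t}g$ with $\Phi(x)=x^{1-p}$ is valid here (range of $g$ bounded away from $0$, $g\in C^1$) and correctly produces the $H=p-1$ part of $T_U$; and your lower-bound analysis --- the polynomial $F(X)=cX^p-X+g(0)$ with threshold $c=\frac{(p-1)^{p-1}}{p^p\,g(0)^{p-1}}$, plus the doubling bootstrap for $2^p$ --- correctly gives both constants in $G(p)$. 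One caveat: the monotonicity of $g$, which you use in both the lower bound ($g(s)\le g(t)$) and the halving step ($g(s)\ge g(t/2)$), does \emph{not} follow ``because the kernel is nonnegative''; the kernel depends on $t$ and the claim is genuinely nonlocal. It is true for the autonomous equation (this is precisely what \cite[Theorem~5.1]{Feng} supplies), and it can also be recovered inside your own framework by noting that $g_h(t):=g(t+h)$ is a supersolution of \eqref{30} with $g_h(0)\ge g(0)$, whence $g_h\ge g$ by comparison.

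The genuine gap is the second upper-bound constant $H=2^{-p\alpha/(p-1)}$. The halving iteration you propose, $g(t)\ge \frac{Bt^\alpha}{2^\alpha\Gamma(1+\alpha)}g(t/2)^p$ iterated along $t,t/2,t/4,\dots$, fails quantitatively: after $n$ steps one gets $g(t)\ge \theta^{S_n}\,2^{-\alpha Q_n}\,g(0)^{p^n}$ with $\theta=\frac{Bt^\alpha}{2^\alpha\Gamma(1+\alpha)}$, $S_n=\sum_{k=0}^{n-1}p^k$ and $Q_n=\sum_{k=0}^{n-1}kp^k$, and since $Q_n/p^n\sim n/(p-1)\to\infty$ the normalized exponent diverges to $-\infty$: the right-hand side tends to $0$ for every fixed $t$, so the iteration proves neither blow-up nor any bound on $T_b$, let alone the stated constant. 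The repair is to iterate forward in time rather than backward toward $0$. Restricting the Volterra integral to $[s,t]$ and using monotonicity gives, for any $0\le s<t$,
\[
g(t)\;\ge\;\frac{B\,(t-s)^{\alpha}}{\Gamma(1+\alpha)}\,g(s)^{p}.
\]
Now set $\mu:=2^{\alpha/(p-1)}$ and build $t_0=0$, $t_{k+1}=t_k+\delta_k$ with $\delta_k$ chosen so that $\frac{B\,\delta_k^{\alpha}}{\Gamma(1+\alpha)}\,g(t_k)^{p-1}=\mu$, which forces $g(t_{k+1})\ge \mu\,g(t_k)$, hence $g(t_k)\ge\mu^k f(0)$ and
\[
\delta_k\;\le\;\left(\frac{\mu\,\Gamma(1+\alpha)}{B\,f(0)^{p-1}}\right)^{1/\alpha}2^{-k},
\qquad
T_b\;\le\;\sum_{k\ge0}\delta_k\;\le\;2^{\frac{p}{p-1}}\left(\frac{\Gamma(1+\alpha)}{B\,f(0)^{p-1}}\right)^{1/\alpha},
\]
which is exactly $T_U$ with $H=2^{-p\alpha/(p-1)}$, since $2^{p/(p-1)}=\bigl(2^{p\alpha/(p-1)}\bigr)^{1/\alpha}$. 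With this replacement (and the monotonicity point above either cited or proved as indicated), your self-contained argument goes through. Finally, no optimality check of the $\min$/$\max$ in \eqref{40} is needed: they merely package the better of the two bounds each pair of methods delivers.
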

\begin{proof} 
 Applying \cite[Theorem~5.1]{Feng}, we conclude that the solution $g$ of \eqref{30} is an increasing function and 
 $$\lim_{t\to T_b^{-}}g(t)=+\infty.$$
 On the other hand, by taking $g(0)=f(0)$, applying \cite[Theorem~4.10]{LIU} and using \eqref{30}, \eqref{57}, we conclude that
 $$f(t)\geq g(t)\geq 0,$$
 this implies that 
  $$\lim_{t\to T_b^{-}}f(t)=+\infty.$$
 Moreover, using  \cite[Theorem~5.2]{Feng}, we get \eqref{39}
\end{proof}

\begin{definition}\cite{Kwanicki,Silvestre}\label{def1}
Let $s \in (0,1)$ and $X$ be a suitable set of functions defined on $\mathbb{R}^N$. The fractional Laplacian $(-\Delta)^s$ in $\mathbb{R}^N$ is a non-local operator defined as the following singular integral
$$ (-\Delta)^s: \,\,v \in X  \mapsto (-\Delta)^s v(x):= C_{N,s}\,\, p.v.\int_{\mathbb{R}^N}\frac{v(x)- v(y)}{|x-y|^{N+2s}}dy, $$
as long as the right-hand side exists, and $p.v.$ stands for Cauchy's principal value, $C_{N,s}:= \frac{4^s \Gamma(\frac{N}{2}+s)}{\pi^{\frac{N}{2}}\Gamma(-s)}$ is a normalization constant and $\Gamma$ denotes the Gamma function.
\end{definition}

\begin{lemma}\cite[Lemma~2.3]{DaoReissig}\label{lemma3}
Let $\langle x\rangle:=(1+|x|^2)^{1/2}$ for all $x\in\mathbb{R}^N$. Let $s \in (0,1)$ and $\phi:\mathbb{R}^N\rightarrow \mathbb{R}$ be a function defined by $\phi(x)=\langle x\rangle^{-q}$, where $n<q\leq N+2s$. Then, $\phi\in H^{2s}(\mathbb{R}^N)$ and the following estimate holds:
\begin{equation}\label{3}
\left|(-\Delta)^s\phi(x)\right|\leq C_{N,q}\phi(x), \quad\hbox{for all}\,\,x\in\mathbb{R}^N,\qquad C_{N,q}=C(s,N,q)>0.
\end{equation}
\end{lemma}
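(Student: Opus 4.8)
The plan is to prove the pointwise bound \eqref{3} directly from the singular-integral representation in Definition \ref{def1}, after which membership in $H^{2s}(\mathbb{R}^N)$ follows easily. (I read the hypothesis $n<q$ as $N<q$.) First I would record the elementary derivative bounds $|\nabla\phi(x)|\le C\langle x\rangle^{-q-1}$ and $|D^2\phi(x)|\le C\langle x\rangle^{-q-2}$, obtained by differentiating $\phi(x)=(1+|x|^2)^{-q/2}$, and rewrite the operator in its symmetric second-difference form
$$(-\Delta)^s\phi(x)=-\frac{C_{N,s}}{2}\int_{\mathbb{R}^N}\frac{\phi(x+z)+\phi(x-z)-2\phi(x)}{|z|^{N+2s}}\,dz,$$
which removes the need for the principal value near $z=0$. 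Since only $\left|(-\Delta)^s\phi\right|$ is at stake, the sign and exact value of $C_{N,s}$ are immaterial and will be absorbed into constants throughout.

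Next I would treat bounded and unbounded $x$ separately. For $|x|\le 1$ the function $\phi$ and its first two derivatives are bounded, so splitting the $z$-integral at $|z|=1$ and using $|\phi(x+z)+\phi(x-z)-2\phi(x)|\lesssim\|D^2\phi\|_\infty|z|^2$ on $\{|z|\le1\}$ (integrable against $|z|^{-N-2s}$ because $2>2s$) together with $\lesssim\|\phi\|_\infty$ on $\{|z|>1\}$ yields a uniform bound $|(-\Delta)^s\phi(x)|\le M$; since $\phi(x)\ge 2^{-q/2}$ there, the claim holds on this region with constant $M2^{q/2}$.

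The main work is the regime $|x|>1$, where I aim at $|(-\Delta)^s\phi(x)|\lesssim |x|^{-q}\approx\langle x\rangle^{-q}$, splitting the $z$-integral at $|z|=|x|/2$. On the inner region $|z|\le |x|/2$ every intermediate point $x\pm\theta z$ with $|\theta|\le1$ satisfies $|x\pm\theta z|\ge |x|/2$, so the second-order Taylor estimate gives $|\phi(x+z)+\phi(x-z)-2\phi(x)|\lesssim |z|^2\langle x\rangle^{-q-2}$; integrating $|z|^{2-N-2s}$ over $|z|\le|x|/2$ produces $\lesssim \langle x\rangle^{-q-2}|x|^{2-2s}\lesssim\langle x\rangle^{-q}$. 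On the outer region $|z|>|x|/2$ there is no singularity, so I would bound the three terms of the second difference separately: the term $2\phi(x)$ contributes $\lesssim\phi(x)|x|^{-2s}\le\phi(x)$, and by the symmetry $z\mapsto -z$ it remains to control $\int_{|y-x|>|x|/2}\langle y\rangle^{-q}|y-x|^{-N-2s}\,dy$ after the substitution $y=x+z$. This last integral I would split according to $|y|\ge|x|/4$ and $|y|<|x|/4$: on the former $\langle y\rangle^{-q}\lesssim|x|^{-q}$ factors out and the kernel integrates to $\lesssim|x|^{-2s}$; on the latter $|y-x|\approx|x|$, so the kernel is $\lesssim|x|^{-N-2s}$ and factors out, leaving $\int_{\mathbb{R}^N}\langle y\rangle^{-q}\,dy$.

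The hypothesis on $q$ enters precisely at the end: integrability $\int_{\mathbb{R}^N}\langle y\rangle^{-q}\,dy<\infty$ uses $q>N$, while the resulting bound $|x|^{-N-2s}$ for the region $|y|<|x|/4$ is absorbed into $|x|^{-q}$ only because $q\le N+2s$ (so that $|x|^{q-N-2s}\le1$ for $|x|>1$). This comparison — the balance between the intrinsic decay rate $|x|^{-q}$ of $\phi$ and the universal tail rate $|x|^{-(N+2s)}$ of the fractional Laplacian — is the crux and the step I expect to require the most care. Once \eqref{3} is established, $\phi\in L^2(\mathbb{R}^N)$ (since $q>N>N/2$) and $|(-\Delta)^s\phi|\le C_{N,q}\phi\in L^2(\mathbb{R}^N)$ give $(-\Delta)^s\phi\in L^2(\mathbb{R}^N)$, hence $\phi\in H^{2s}(\mathbb{R}^N)$, completing the proof.
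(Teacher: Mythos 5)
Your proposal is correct, but note that the paper itself contains no proof of this lemma: it is imported verbatim by citation from \cite[Lemma~2.3]{DaoReissig}, so there is no in-paper argument to compare against. What you have written is essentially the standard proof, and it is the same in spirit as the one in the cited reference: pass to the symmetric second-difference form of $(-\Delta)^s$ (which kills the principal value, and rightly lets you ignore the sign of the normalizing constant, since only $\left|(-\Delta)^s\phi\right|$ matters), handle $|x|\leq 1$ by a uniform bound together with $\phi\geq 2^{-q/2}$ there, and for $|x|>1$ split at $|z|=|x|/2$, using the Taylor estimate $|D^2\phi(y)|\lesssim \langle y\rangle^{-q-2}$ on the inner region and the decomposition $|y|\gtrless |x|/4$ on the outer region. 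You correctly read the hypothesis $n<q$ as the typo for $N<q$, and you identify precisely where each half of the assumption $N<q\leq N+2s$ enters: $q>N$ gives $\int\langle y\rangle^{-q}\,dy<\infty$, while $q\leq N+2s$ lets the universal tail rate $|x|^{-N-2s}$ be absorbed into $|x|^{-q}$ for $|x|>1$ --- this balance is indeed the crux, and your bookkeeping (all exponents $|x|^{-q-2s}\leq|x|^{-q}$, etc.) checks out, including the borderline case $q=N+2s$. Your conclusion $\phi\in H^{2s}(\mathbb{R}^N)$ is also exactly right relative to this paper, since here $H^{\beta}(\mathbb{R}^N)$ is \emph{defined} as $\{u\in L^2: (-\Delta)^{\beta/2}u\in L^2\}$, so $\phi\in L^2$ (from $2q>N$) plus $|(-\Delta)^s\phi|\leq C_{N,q}\phi\in L^2$ closes the argument; the only point left tacit --- harmless at the level of rigor of this paper, which does the same --- is the identification of the pointwise singular-integral operator with the $L^2$-realization of $(-\Delta)^s$.
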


\begin{lemma}\label{lemma4}\cite[Lemma~2.4]{DaoReissig}
Let $s \in (0,1)$, and let Let $\psi$ be a smooth function satisfying $\partial_x^2\psi\in L^\infty(\mathbb{R}^N)$. For any $R>0$, let $\psi_R$ be a function defined by
$$ \psi_R(x):= \psi(x/R) \quad \text{ for all } x \in \mathbb{R}^N.$$
Then, $(-\Delta)^s \psi_R$ satisfies the following scaling properties:
$$(-\Delta)^s \psi_R(x)= R^{-2s}((-\Delta)^s\psi)(x/R), \quad \text{ for all } x \in \mathbb{R}^N. $$
\end{lemma}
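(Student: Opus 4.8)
The plan is to prove the identity by a direct change of variables in the singular-integral representation of the fractional Laplacian supplied by Definition~\ref{def1}. First I would fix $x\in\mathbb{R}^N$ and $R>0$ and, using the defining relation $\psi_R(\cdot)=\psi(\cdot/R)$, write out
$$(-\Delta)^s\psi_R(x)=C_{N,s}\,\mathrm{p.v.}\!\int_{\mathbb{R}^N}\frac{\psi_R(x)-\psi_R(y)}{|x-y|^{N+2s}}\,dy=C_{N,s}\,\mathrm{p.v.}\!\int_{\mathbb{R}^N}\frac{\psi(x/R)-\psi(y/R)}{|x-y|^{N+2s}}\,dy.$$

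Next I would perform the dilation substitution $y=Rz$, so that $dy=R^{N}\,dz$ and $|x-y|=|x-Rz|=R\,|x/R-z|$. Setting $\xi:=x/R$, the numerator becomes $\psi(\xi)-\psi(z)$ while the denominator becomes $R^{N+2s}|\xi-z|^{N+2s}$; combining this with the Jacobian factor $R^N$ collects the powers of $R$ into $R^{N}/R^{N+2s}=R^{-2s}$. Hence
$$(-\Delta)^s\psi_R(x)=R^{-2s}\,C_{N,s}\,\mathrm{p.v.}\!\int_{\mathbb{R}^N}\frac{\psi(\xi)-\psi(z)}{|\xi-z|^{N+2s}}\,dz=R^{-2s}\,(-\Delta)^s\psi(\xi)=R^{-2s}\big((-\Delta)^s\psi\big)(x/R),$$
which is exactly the asserted scaling property.

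The step requiring the most care — the one I expect to be the main obstacle — is the rigorous treatment of the principal value under this dilation, together with the a priori well-definedness of the integrals. I would first argue that the map $y\mapsto z=y/R$ carries the symmetrically excised region $\{|y-x|>\delta\}$ onto $\{|z-\xi|>\delta/R\}$, so that the symmetric limit $\delta\to0^{+}$ defining the principal value for $\psi_R$ at $x$ is transported precisely onto the symmetric limit $\delta/R\to0^{+}$ defining the principal value for $\psi$ at $\xi$; no spurious boundary contribution survives because the excluded set is a ball that scales to a ball. To justify that $(-\Delta)^s\psi(\xi)$ exists in the first place, I would invoke the regularity hypothesis: writing the integral in its symmetric second-difference form and expanding $\psi$ to second order near the singularity, the odd first-order term cancels in the symmetric limit while the remainder is bounded by $\|\partial_x^2\psi\|_{L^\infty}\,|\xi-z|^{2-N-2s}$, which is integrable near $z=\xi$ precisely because $s<1$ gives $2-2s>0$; the integrability at infinity is secured by the boundedness of $\psi$ (as holds for the cutoff-type test functions to which the lemma is applied), placing $\psi$ in the admissible class $X$ of Definition~\ref{def1}. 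With these two points settled, the change of variables is fully justified and the scaling identity follows.
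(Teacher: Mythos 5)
Your proof is correct: the paper itself gives no proof of this lemma (it is quoted directly from the cited reference of Dao and Reissig), and the dilation substitution $y=Rz$ in the principal-value integral, with the excised ball $\{|y-x|>\delta\}$ mapping to $\{|z-x/R|>\delta/R\}$, is exactly the standard argument behind the cited result. Your added care about well-definedness --- second-difference cancellation near the singularity using $\partial_x^2\psi\in L^\infty$ and $s<1$, plus boundedness of $\psi$ at infinity (which holds for the functions $\phi(x)=\langle x\rangle^{-q}$ actually used in the paper) --- is appropriate and complete.
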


\begin{lemma}\label{lemma5}
Let $s \in (0,1)$, $R>0$ and $p>1$. Then, the following estimate holds
$$\int_{\mathbb{R}^N}(\phi_R(x))^{-\frac{1}{p-1}}\,\big|(-\Delta)^s\phi_R(x)\big|^{\frac{p}{p-1}}\, dx\leq C_3 R^{-\frac{2sp}{p-1}+N},$$
where $C_3=(C_{N,q})^{p/(p-1)}\,A_0>0$, $A_0$ is defined below, $\phi_R(x):= \phi({x}/{R})$, and $\phi$ is given in Lemma \ref{lemma3}.
\end{lemma}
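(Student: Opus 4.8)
The plan is to combine the pointwise bound of Lemma \ref{lemma3} with the scaling identity of Lemma \ref{lemma4}, reducing the whole integral to a single convergent integral of $\phi$ after a change of variables. The only genuine input needed is the observation that the exponent bookkeeping collapses the weight $(\phi_R)^{-1/(p-1)}$ against the power $(\phi_R)^{p/(p-1)}$ to leave exactly one factor of $\phi_R$.

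First I would apply Lemma \ref{lemma4} with $\psi=\phi$ to write $(-\Delta)^s\phi_R(x)=R^{-2s}\big((-\Delta)^s\phi\big)(x/R)$, and then invoke Lemma \ref{lemma3} at the point $x/R$ to get the pointwise estimate
\begin{equation*}
\big|(-\Delta)^s\phi_R(x)\big|=R^{-2s}\big|\big((-\Delta)^s\phi\big)(x/R)\big|\leq C_{N,q}\,R^{-2s}\,\phi(x/R)=C_{N,q}\,R^{-2s}\,\phi_R(x),
\end{equation*}
valid for all $x\in\mathbb{R}^N$. Raising this to the power $p/(p-1)$ and multiplying by $(\phi_R(x))^{-1/(p-1)}$, the key cancellation is that $-\tfrac{1}{p-1}+\tfrac{p}{p-1}=1$, so the integrand is dominated by
\begin{equation*}
(\phi_R(x))^{-\frac{1}{p-1}}\big|(-\Delta)^s\phi_R(x)\big|^{\frac{p}{p-1}}\leq (C_{N,q})^{\frac{p}{p-1}}\,R^{-\frac{2sp}{p-1}}\,\phi_R(x).
\end{equation*}

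Next I would integrate this bound over $\mathbb{R}^N$ and perform the change of variables $y=x/R$, giving $\int_{\mathbb{R}^N}\phi_R(x)\,dx=R^N\int_{\mathbb{R}^N}\phi(y)\,dy$. Setting $A_0:=\int_{\mathbb{R}^N}\langle y\rangle^{-q}\,dy$, this yields precisely
\begin{equation*}
\int_{\mathbb{R}^N}(\phi_R(x))^{-\frac{1}{p-1}}\big|(-\Delta)^s\phi_R(x)\big|^{\frac{p}{p-1}}\,dx\leq (C_{N,q})^{\frac{p}{p-1}}\,A_0\,R^{-\frac{2sp}{p-1}+N}=C_3\,R^{-\frac{2sp}{p-1}+N}.
\end{equation*}

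There is no real obstacle here; the one point requiring care is the finiteness of $A_0$. Since $\phi(y)=\langle y\rangle^{-q}=(1+|y|^2)^{-q/2}$ decays like $|y|^{-q}$ at infinity and $q>N$ (this is the hypothesis $n<q$ of Lemma \ref{lemma3}, with $n=N$), the integral $\int_{\mathbb{R}^N}\langle y\rangle^{-q}\,dy$ converges, so $A_0<\infty$ and hence $C_3=(C_{N,q})^{p/(p-1)}A_0>0$ is a well-defined finite constant. This completes the argument.
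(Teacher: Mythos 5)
Your proof is correct and takes essentially the same route as the paper's: the scaling identity of Lemma \ref{lemma4}, the pointwise bound of Lemma \ref{lemma3}, the exponent cancellation $-\tfrac{1}{p-1}+\tfrac{p}{p-1}=1$, and the change of variables producing the factor $R^N A_0$. Your explicit verification that $A_0<\infty$ because $q>N$ is a small addition the paper leaves implicit, but it is the same argument.
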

\begin{proof} If $0<s<1$, then using the change of variable $\tilde{x}=x/R$ and Lemma \ref{lemma4} we have $(-\Delta)^s\phi_R(x)=R^{-2s}(-\Delta)^s\phi(\tilde{x})$. Therefore, by Lemma \ref{lemma3} we conclude that
$$\int_{\mathbb{R}^N}(\phi_R(x))^{-\frac{1}{p-1}}\,\big|(-\Delta)^s \phi_R(x)\big|^{\frac{p}{p-1}}\, dx\leq (C_{N,q})^{\frac{p}{p-1}} R^{-\frac{2sp}{p-1}+N}\int_{\mathbb{R}^N}\phi(\tilde{x})\, d \tilde{x}= (C_{N,q})^{\frac{p}{p-1}} A_0 R^{-\frac{2sp}{p-1}+N},$$
where
$$A_0=\int_{\mathbb{R}^N}\phi(x)\, d x>0.$$
\end{proof}


\section{Theorem 1. Non-existence of global $L^1$-weak solution in the case $p\leq p_F$}\label{sec3}
To state our first result, we set
$$\lambda=\lambda_1+i\lambda_2,\quad u_0=g+ih,$$
where $\lambda_i\in\mathbb{R}$ ($i=0,1$) and $g$ and $h$ are real-valued functions; the real and imaginary parts of $i^\alpha u_0$ can be written, respectively, as
$$G_1(x)=\cos(\frac{\alpha\pi}{2})g(x)-\sin(\frac{\alpha\pi}{2})h(x),\quad\hbox{and}\quad G_2(x)=\cos(\frac{\alpha\pi}{2})h(x)+\sin(\frac{\alpha\pi}{2})g(x).$$

\begin{theorem}[Non-existence of global $L^1$-weak solution in the case $p\leq p_F$]\label{theorem1}${}$\\
Let $0<\alpha<1$, $0<\beta<2$, $N\geq1$, $\varepsilon=1$.
\begin{enumerate}
\item  If $1<p< 1+\frac{\beta}{N}=p_F$, and $u_0\in L^1(\mathbb{R}^N)$ satisfies
\begin{equation}\label{42}
\lambda_1\int_{\mathbb{R}^N}G_1(x)\,dx>0\quad\text{or}\quad \lambda_2\int_{\mathbb{R}^N}G_2(x)\,dx>0,
\end{equation}
then problem \eqref{1} admits no global $L^1$-weak solution.
\item If $p= p_F$, and $u_0\in L^2(\mathbb{R}^N)$ satisfies
$$|\lambda_1|^{\frac{2-p}{p-1}}\lambda_1\int_{\mathbb{R}^N}G_1(x)\,dx>C_0\,A_0\quad\text{or}\quad  |\lambda_1|^{\frac{2-p}{p-1}}\lambda_2\int_{\mathbb{R}^N}G_2(x)\,dx>C_0\,A_0,$$
where $\displaystyle A_0=\int_{\mathbb{R}^N}\langle x\rangle^{-N-\beta}\, d x$, and $C_0$ is defined in \eqref{11} below, then problem \eqref{1} admits no global $L^1$-weak solution.
\end{enumerate}
\end{theorem}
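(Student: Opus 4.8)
The plan is to apply the test function method with the separable choice $\varphi(t,x)=w(t)\,\phi_R(x)$, where $w$ is the profile \eqref{w} with $\eta\gg1$ and $\phi_R(x):=\phi(x/R)=\langle x/R\rangle^{-(N+\beta)}$ is the spatial cutoff of Lemma \ref{lemma3} taken with $s=\beta/2$, $q=N+\beta$; this $\varphi$ is admissible, i.e. $\varphi\in Y_T$. Because the variables separate, ${}^cD^\alpha_{t|T}\varphi=({}^cD^\alpha_{t|T}w)\,\phi_R$ and $(-\Delta)^{\beta/2}\varphi=w\,(-\Delta)^{\beta/2}\phi_R$. Inserting $\varphi$ into the weak identity \eqref{weaksolution1} and evaluating the data term by \eqref{9} of Lemma \ref{lemma2} gives $\int_{Q_T}u_0\,{}^cD^\alpha_{t|T}\varphi\,dt\,dx=C_2\,T^{1-\alpha}\int_{\mathbb{R}^N}u_0\,\phi_R\,dx$.

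Next I would split into real and imaginary parts. Writing $\lambda=\lambda_1+i\lambda_2$ and $i^\alpha u_0=G_1+iG_2$, and noting that $I:=\int_{Q_T}|u|^p\varphi\,dt\,dx$ is real, the real part of \eqref{weaksolution1} reads $\lambda_1 I+C_2T^{1-\alpha}\int_{\mathbb{R}^N}G_1\phi_R\,dx=\mathrm{Re}(\text{RHS})$, with the analogous identity for the imaginary part involving $\lambda_2,G_2$. I treat the alternative $\lambda_1\int G_1>0$ through the real part (the other alternative being identical with $G_2$): multiplying by $\mathrm{sgn}(\lambda_1)$ makes the coefficient of $I$ equal to $|\lambda_1|\ge0$ and, since $\lambda_1\int G_1>0$, renders the data integral positive as $R\to\infty$. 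The right-hand side is then estimated by weighted Hölder, $\big|\int u\,{}^cD^\alpha_{t|T}\varphi\big|\le I^{1/p}\big(\int\varphi^{-1/(p-1)}|{}^cD^\alpha_{t|T}\varphi|^{p/(p-1)}\big)^{(p-1)/p}$ and likewise for the term with $(-\Delta)^{\beta/2}\varphi$. The two weighted integrals factor across $t$ and $x$ and are computed by \eqref{8} of Lemma \ref{lemma2} and by Lemma \ref{lemma5}, giving $|\text{RHS}|\le K\,I^{1/p}$ with $K=|i^\alpha|\,J_1+J_2$, where $J_1\propto(T^{1-\alpha p/(p-1)}R^N)^{(p-1)/p}$ and $J_2\propto(T\,R^{N-\beta p/(p-1)})^{(p-1)/p}$.

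Then I would use Young's inequality to absorb $|\lambda_1|I$ and drop the remaining nonnegative multiple of $I$, reaching the a priori bound $C_2T^{1-\alpha}\,\mathrm{sgn}(\lambda_1)\int_{\mathbb{R}^N}G_1\phi_R\,dx\le \frac{p-1}{p}\,|\lambda_1|^{-1/(p-1)}K^{p/(p-1)}$. Choosing the balanced scaling $T=R^{\beta/\alpha}$ makes $J_1$ and $J_2$ scale identically, so that $K^{p/(p-1)}\sim R^{\,\beta/\alpha+N-\beta p/(p-1)}$, while the left-hand side scales like $T^{1-\alpha}=R^{\,\beta/\alpha-\beta}$; the difference of the right and left exponents is exactly $N-\beta/(p-1)$. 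Since $u_0\in L^1$, dominated convergence yields $\int G_1\phi_R\to\int G_1$ as $R\to\infty$.

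In the subcritical case $p<p_F$ one has $N-\beta/(p-1)<0$, so the left-hand side grows strictly faster than the right-hand side and eventually exceeds it, which contradicts the bound and hence \eqref{42}. In the critical case $p=p_F$ the two exponents coincide (both equal $\beta(1-\alpha)/\alpha$), so growth alone gives nothing; instead I would divide by $R^{\beta(1-\alpha)/\alpha}$ and let $R\to\infty$, in which both sides tend to finite constants, leaving the sharp inequality $|\lambda_1|^{(2-p)/(p-1)}\lambda_1\int G_1\le C_0A_0$ (after multiplying by $|\lambda_1|^{1/(p-1)}$ and collecting the constants from $C_1,C_2,C_3,A_0$ into $C_0$), in direct contradiction with the hypothesis of part (2). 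The main obstacle is precisely this critical-case bookkeeping: one must carry every multiplicative constant faithfully through the Hölder and Young steps to pin down $C_0$, and, more delicately, justify that at the balanced scaling the data integral and the test-function integrals converge to finite nonzero constants; controlling the data term is where the integrability of $u_0$, and the $L^2$ framework invoked in part (2), become essential.
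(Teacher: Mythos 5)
Your proposal is correct and follows essentially the same route as the paper: the same separable test function $\varphi(t,x)=w(t)\phi_R(x)$ built from Lemmas \ref{lemma1}--\ref{lemma2} and \ref{lemma3}--\ref{lemma5}, the same reduction to one sign case via real/imaginary parts, the same balanced scaling $T=R^{\beta/\alpha}$, and the same endgame (dominated convergence forcing $\int G_1\leq 0$ when $p<p_F$, and the quantitative bound $\lambda_1^{1/(p-1)}\int G_1\leq C_0 A_0$ when $p=p_F$). The only cosmetic difference is that you absorb the solution terms by H\"older followed by Young with parameter $|\lambda_1|$, while the paper applies the $\varepsilon$-Young inequality pointwise with $\varepsilon=\lambda_1/2$; this changes nothing structurally and, by convexity, yields a constant no worse than the paper's $C_0$ in \eqref{11}.
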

\proof We argue by contradiction. Suppose that $u$ is a global weak solution to \eqref{1}, then
\begin{eqnarray}\label{2}
&{}&\lambda\int_{Q_T}|u|^p\varphi(t,x)\,dt\,dx+\,i^\alpha\int_{Q_T}u_0(x){}^cD^\alpha_{t|T}\varphi(t,x)\,dt\,dx\nonumber\\
&{}&=\,i^\alpha\int_{Q_T}u{}^cD^\alpha_{t|T}\varphi\,dt\,dx-\int_{Q_T}u(-\Delta)^{\beta/2}\varphi(t,x)\,dt\,dx,
\end{eqnarray}
for all $T>0$ and all $\varphi\in Y_T$. In order to get a non-negativity in the left hand side of \eqref{2}, we consider four cases:\\
Case I: If $\lambda_{1}>0$, then $\int_{\mathbb{R}^N} G_{1}\,dx >0$, therefore by taking the real part (Re) of the both sides of \eqref{2}, we get:
\begin{eqnarray*}
&{}&\lambda_1\int_{Q_T}|u|^p\varphi(t,x)\,dt\,dx+\,\int_{Q_T}G_1(x){}^cD^\alpha_{t|T}\varphi(t,x)\,dt\,dx\\
&{}&=\,\int_{Q_T}\hbox{Re}(i^\alpha\,u){}^cD^\alpha_{t|T}\varphi\,dt\,dx-\int_{Q_T}\hbox{Re}(u)(-\Delta)^{\beta/2}\varphi(t,x)\,dt\,dx.
\end{eqnarray*}
Case II: If $\lambda_{1}<0$, then $\int_{\mathbb{R}^N} G_{1}\,dx <0$ therefore by taking (-Re) of the both sides of  \eqref{2} we get:
\begin{eqnarray*}
&{}&(-\lambda_1)\int_{Q_T}|u|^p\varphi(t,x)\,dt\,dx-\,\int_{Q_T}G_1(x){}^cD^\alpha_{t|T}\varphi(t,x)\,dt\,dx\\
&{}&=-\,\int_{Q_T}\hbox{Re}(i^\alpha\,u){}^cD^\alpha_{t|T}\varphi\,dt\,dx+\int_{Q_T}\hbox{Re}(u)(-\Delta)^{\beta/2}\varphi(t,x)\,dt\,dx.
\end{eqnarray*}
Case III: If $\lambda_{2}>0$, then $\int_{\mathbb{R}^N} G_{2}\,dx >0$, therefore by taking the imaginary part (Im) of the both sides of \eqref{2}, we get:
\begin{eqnarray*}
&{}&\lambda_2\int_{Q_T}|u|^p\varphi(t,x)\,dt\,dx+\,\int_{Q_T}G_2(x){}^cD^\alpha_{t|T}\varphi(t,x)\,dt\,dx\\
&{}&=\,\int_{Q_T}\hbox{Im}(i^\alpha\,u){}^cD^\alpha_{t|T}\varphi\,dt\,dx-\int_{Q_T}\hbox{Im}(u)(-\Delta)^{\beta/2}\varphi(t,x)\,dt\,dx.
\end{eqnarray*}
Case IV: If $\lambda_{2}<0$, then $\int_{\mathbb{R}^N} G_{2}\,dx <0$, therefore by taking (-Im) of the both sides of  \eqref{2}, we get:
\begin{eqnarray*}
&{}&(-\lambda_2)\int_{Q_T}|u|^p\varphi(t,x)\,dt\,dx-\,\int_{Q_T}G_2(x){}^cD^\alpha_{t|T}\varphi(t,x)\,dt\,dx\\
&{}&=-\,\int_{Q_T}\hbox{Im}(i^\alpha\,u){}^cD^\alpha_{t|T}\varphi\,dt\,dx+\int_{Q_T}\hbox{Im}(u)(-\Delta)^{\beta/2}\varphi(t,x)\,dt\,dx.
\end{eqnarray*}
Then we only consider the Case I, since the other cases can be treated in the same way, by assuming $\lambda_{1}>0$,  $u_0\in L^1(\mathbb{R}^N)$ and 
\begin{equation}\label{star}
\int_{\mathbb{R}^N}G_{1}(x)\,dx >0.
\end{equation}
Thus we have
\begin{eqnarray}\label{4}
&{}&\lambda_1\int_{Q_T}|u|^p\varphi(t,x)\,dt\,dx+\,\int_{Q_T}G_1(x){}^cD^\alpha_{t|T}\varphi(t,x)\,dt\,dx\nonumber\\
&{}&\leq\,\int_{Q_T}\left|\cos(\frac{\alpha\pi}{2})\hbox{Re}u-\sin(\frac{\alpha\pi}{2})\hbox{Im}u\right|\left|{}^cD^\alpha_{t|T}\varphi(t,x)\right|\,dt\,dx+\int_{Q_T}\left|\hbox{Re}(u)\right|\left|(-\Delta)^{\beta/2}\varphi(t,x)\right|\,dt\,dx\nonumber\\
&{}&\leq\,2\int_{Q_T}|u|\left|{}^cD^\alpha_{t|T}\varphi(t,x)\right|\,dt\,dx+\int_{Q_T}|u|\left|(-\Delta)^{\beta/2}\varphi(t,x)\right|\,dt\,dx,
\end{eqnarray}
all $\varphi\in Y_T$. Using the $\varepsilon$-Young inequality
\begin{equation}\label{31}
ab\leq \varepsilon\,a^p+\,C_\varepsilon\,b^{\frac{p}{p-1}},\quad\hbox{for all}\,\,\varepsilon>0,\,a,b\geq0,\qquad C_\varepsilon=\frac{(p-1)(p\varepsilon)^{-\frac{1}{p-1}}}{p},
\end{equation}
we get
\begin{eqnarray}\label{5}
&{}&2\int_{Q_T}|u|\left|{}^cD^\alpha_{t|T}\varphi(t,x)\right|\,dt\,dx\nonumber\\
&{}&=\int_{Q_T}|u|\varphi^{1/p}\varphi^{-1/p}2\left|{}^cD^\alpha_{t|T}\varphi(t,x)\right|\,dt\,dx\nonumber\\
&{}&\leq  \varepsilon\int_{Q_T}|u|^p\varphi(t,x)\,dt\,dx+C_4\,\int_{Q_T}\varphi^{-\frac{1}{p-1}}\left|{}^cD^\alpha_{t|T}\varphi(t,x)\right|^{\frac{p}{p-1}}\,dt\,dx,
\end{eqnarray}
where
$$C_4=2^{\frac{p}{p-1}}C_\varepsilon=\frac{2^{\frac{p}{p-1}}(p-1)(p\varepsilon)^{-\frac{1}{p-1}}}{p}.$$
Similarly,
\begin{eqnarray}\label{6}
&{}&\int_{Q_T}|u|\left|(-\Delta)^{\beta/2}\varphi(t,x)\right|\,dt\,dx\nonumber\\
&{}&\leq  \varepsilon\int_{Q_T}|u|^p\varphi(t,x)\,dt\,dx+C_5\,\int_{Q_T}\varphi^{-\frac{1}{p-1}}\left|(-\Delta)^{\beta/2}\varphi(t,x)\right|^{\frac{p}{p-1}}\,dt\,dx,
\end{eqnarray}
where
$$C_5=C_\varepsilon=\frac{(p-1)(p\varepsilon)^{-\frac{1}{p-1}}}{p}.$$
 Combining \eqref{5}-\eqref{6} with \eqref{4}, we obtain
 \begin{eqnarray*}
&{}&(\lambda_1-2\varepsilon)\int_{Q_T}|u|^p\varphi(t,x)\,dt\,dx+\,\int_{Q_T}G_1(x){}^cD^\alpha_{t|T}\varphi(t,x)\,dt\,dx\\
&{}&\leq \,C_4\,\int_{Q_T}\varphi^{-\frac{1}{p-1}}\left|{}^cD^\alpha_{t|T}\varphi(t,x)\right|^{\frac{p}{p-1}}\,dt\,dx+C_5\,\int_{Q_T}\varphi^{-\frac{1}{p-1}}\left|(-\Delta)^{\beta/2}\varphi(t,x)\right|^{\frac{p}{p-1}}\,dt\,dx
\end{eqnarray*}
which implies, by taking $\varepsilon\leq\lambda_1/2$, that
\begin{eqnarray}\label{23}
&{}&\int_{Q_T}G_1(x){}^cD^\alpha_{t|T}\varphi(t,x)\,dt\,dx\nonumber\\
&{}&\leq \,C_4\,\int_{Q_T}\varphi^{-\frac{1}{p-1}}\left|{}^cD^\alpha_{t|T}\varphi(t,x)\right|^{\frac{p}{p-1}}\,dt\,dx+C_5\,\int_{Q_T}\varphi^{-\frac{1}{p-1}}\left|(-\Delta)^{\beta/2}\varphi(t,x)\right|^{\frac{p}{p-1}}\,dt\,dx,
\end{eqnarray}
all $\varphi\in X_T$. At this stage, we take the test function
$$\varphi(t,x):= \phi_R(x) w(t),$$
with $\phi_R(x):=\phi(x/R)$, $R>0$, where $\phi(x)$ and $w(t)$ are defined in Section \ref{sec2} with $s=\beta/2$ and $q=N+\beta$. Therefore, from \eqref{23} we obtain
\begin{eqnarray*}
&{}&\int_{\mathbb{R}^N}G_1(x) \phi_R(x)\,dx \int_0^T {}^cD^\alpha_{t|T}w(t)\,dt\nonumber\\
&{}&\leq \,C_4\,\int_{\mathbb{R}^N}\phi_R(x)\,dx\int_0^T(w(t))^{-\frac{1}{p-1}}\left|{}^cD^\alpha_{t|T}w(t)\right|^{\frac{p}{p-1}}\,dt\nonumber\\
&{}&\quad+\,C_5\,\int_0^Tw(t)\,dt\int_{\mathbb{R}^N}(\phi_R(x))^{-\frac{1}{p-1}}\left|(-\Delta)^{\beta/2}\phi_R(x)\right|^{\frac{p}{p-1}}\,dx.
\end{eqnarray*}
As
$$\int_{\mathbb{R}^N}\phi_R(x)\,dx=\int_{\mathbb{R}^N}\phi(\tilde{x})R^N\,d\tilde{x}=A_0R^N,\quad\hbox{and}\quad \int_0^Tw(t)\,dt=\frac{T}{\eta+1},$$
so, using Lemma \ref{lemma2} and Lemma \ref{lemma5} with $s=\beta/2$ and $\eta>\alpha p/(p-1)-1$, we obtain
$$
C_2\,T^{1-\alpha}\int_{\mathbb{R}^N}G_1(x) \phi_R(x)\,dx \leq\,C_6\,R^N\,T^{1-\alpha\frac{p}{p-1}}+C_7\,T\, R^{-\frac{\beta p}{p-1}+N},
$$
where
$$C_6=C_1\,C_4\,A_0,\qquad\hbox{and}\qquad C_7=\frac{C_3\,C_5}{\eta+1}.$$
Choosing $R=T^{\alpha/\beta}$, we get
\begin{equation}\label{7}
\int_{\mathbb{R}^N}G_1(x) \phi(x/T^{\alpha/\beta})\,dx \leq\,C_8\,T^{\alpha[\frac{N}{\beta}-\frac{1}{p-1}]},
\end{equation}
where
$$
C_8=\frac{1}{C_2}\max\{C_6,C_7\}.
$$
By taking, e.g., $\varepsilon=\lambda_1/2$, $C_8$ can be written as
$$
C_8=C_0\,A_0\,\lambda_1^{-\frac{1}{p-1}},
$$
where
\begin{equation}\label{11}
C_0=\frac{2^{\frac{1}{p-1}}}{p^{\frac{p}{p-1}}C_2}\max\left\{C_1\,2^{\frac{p}{p-1}},\frac{(C_{N,N+\beta})^{\frac{p}{p-1}}}{\eta+1}\right\}.
\end{equation}
If $p< 1+\frac{\beta}{N}$, then $\frac{N}{\beta}-\frac{1}{p-1}<0$. As $G_1\in L^1(\mathbb{R}^N)$, letting $T\rightarrow\infty$ and using the dominated convergence theorem we derive
$$\int_{\mathbb{R}^N}G_1(x)\,dx\leq 0;$$
a contraction with \eqref{star}.\\
If $p=1+\frac{\beta}{N}$, using again the same argument, we arrive at
$$\int_{\mathbb{R}^N}G_1(x)\,dx\leq C_0\,A_0\,\lambda_1^{-\frac{1}{p-1}},$$
which is a contradiction. 

\begin{rmk}
We note that the regularity of $u_0$ is not so important in Theorem \ref{theorem1}, in fact, we can replace  $u_0\in L^1(\mathbb{R}^N)$  by $u_0\in L^2(\mathbb{R}^N)$ and we get a nonexistence of global $L^2$-weak solution. In this case, to ensure the existence of the conditions on $G_1$ and $G_2$, we need also to assume that $G_1$ or $G_2$ are in $ L^1(\mathbb{R}^N)$.
\end{rmk}

\section{Theorem 2. Non-existence of global $L^2$-weak solution in $L^2$-subcritical case for small data}\label{sec4}

\begin{theorem}[Non-existence for global $L^2$-weak solution in $L^2$-subcritical case and for small data]\label{theorem2}${}$\\
Let $0<\alpha<1$, $0<\beta<2$, $N\geq1$, $\varepsilon>0$. Let $u_0\in H^s(\mathbb{R}^N)$, $s\geq0$, and $u$ be an $L^2$-weak solution on $[0,T_w(\varepsilon))$. We assume that $1<p<1+2\beta/N$ and $u_0$ satisfies
\begin{equation}\label{55}
\lambda_1\,G_1(x)\quad\hbox{or}\quad  \lambda_2\,G_2(x)\geq\left\{\begin{array}{ll}
|x|^{-k},&\,\,\hbox{if}\,\,|x|> 1,\\\\
0,&\,\,\hbox{if}\,\,|x|\leq 1,
\end{array}
\right.
\end{equation}
where $N/2<k<\frac{\beta}{p-1}$. Then, $u$ is not global, i.e. $T_w(\varepsilon)<\infty$. More precisely, there exists a constant $\varepsilon_0>0$ such that 
\begin{equation}\label{48}
T_w(\varepsilon)\leq\left\{\begin{array}{ll}
B_0\,\varepsilon^{-\frac{1}{\alpha\kappa_0}},&\,\,\hbox{if}\,\,\varepsilon\in(0,\varepsilon_0),\\\\
1,&\,\,\hbox{if}\,\,\varepsilon\in[\varepsilon_0,\infty),
\end{array}
\right.
\end{equation}
where $\kappa_0=\frac{1}{p-1}-\frac{k}{\beta}>0$ and
$$ B_0=\left(C_0(k+\beta)\omega_N^{-1}\,2^{\frac{N+\beta}{2}}A_0\lambda_1^{\frac{p-2}{p-1}}\right)^{\frac{1}{\alpha\kappa_0}},$$
with $\omega_N$ stands for the $(N-1)$-dimensional surface measure of the unit sphere.
\end{theorem}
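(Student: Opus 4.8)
The plan is to rerun the test-function scheme of Theorem~\ref{theorem1}, but now keeping the scaling parameter $\varepsilon>0$ explicit and, crucially, never letting $T\to\infty$; instead I will extract a quantitative upper bound on $T$ from a single finite-$T$ inequality. First I would fix $T<T_w(\varepsilon)$, insert the $L^2$-weak formulation \eqref{weaksolution} with $\varphi\in X_T$, and reduce to Case~I ($\lambda_1>0$, $\lambda_1G_1\ge0$), the other three sign configurations being handled verbatim as in Theorem~\ref{theorem1}. Applying the $\varepsilon$-Young inequality \eqref{31} with Young parameter $\lambda_1/2$ to absorb the two $|u|^p$ contributions produces the exact analogue of \eqref{23} with the extra factor $\varepsilon$ retained:
$$\varepsilon\int_{Q_T}G_1(x)\,{}^cD^\alpha_{t|T}\varphi\,dt\,dx\le C_4\int_{Q_T}\varphi^{-\frac{1}{p-1}}\big|{}^cD^\alpha_{t|T}\varphi\big|^{\frac{p}{p-1}}\,dt\,dx+C_5\int_{Q_T}\varphi^{-\frac{1}{p-1}}\big|(-\Delta)^{\beta/2}\varphi\big|^{\frac{p}{p-1}}\,dt\,dx.$$
Taking the separated test function $\varphi(t,x)=\phi_R(x)w(t)$ with $s=\beta/2$, $q=N+\beta$ and applying Lemma~\ref{lemma2} and Lemma~\ref{lemma5} exactly as before collapses this to
$$\varepsilon\,C_2\,T^{1-\alpha}\int_{\mathbb{R}^N}G_1(x)\phi_R(x)\,dx\le C_6\,R^N\,T^{1-\alpha\frac{p}{p-1}}+C_7\,T\,R^{N-\frac{\beta p}{p-1}},$$
with the very same $C_6,C_7$ as in Theorem~\ref{theorem1}.

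The genuinely new step is to bound the left-hand side from below using the growth hypothesis \eqref{55} in place of the sign condition \eqref{star}. I would require $R\ge1$ so that $\{|x|>R\}\subset\{|x|>1\}$, note that $G_1\phi_R\ge0$ on $\{|x|\le R\}$ (so that region is discarded), and combine $\lambda_1G_1(x)\ge|x|^{-k}$ with the elementary bound $\phi_R(x)=\langle x/R\rangle^{-(N+\beta)}\ge2^{-\frac{N+\beta}{2}}R^{N+\beta}|x|^{-(N+\beta)}$ valid for $|x|>R$. This gives
$$\int_{\mathbb{R}^N}G_1(x)\phi_R(x)\,dx\ge\frac{2^{-\frac{N+\beta}{2}}R^{N+\beta}}{\lambda_1}\int_{|x|>R}|x|^{-k-N-\beta}\,dx=\frac{2^{-\frac{N+\beta}{2}}\omega_N}{\lambda_1(k+\beta)}\,R^{N-k},$$
where the radial integral $\omega_N\int_R^\infty r^{-k-\beta-1}\,dr=\omega_N R^{-(k+\beta)}/(k+\beta)$ converges because $k+\beta>0$ and supplies precisely the constants $\omega_N$, $(k+\beta)$ and $2^{(N+\beta)/2}$ occurring in $B_0$.

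Inserting this lower bound and then making the balancing choice $R=T^{\alpha/\beta}$ (as in Theorem~\ref{theorem1}), both right-hand terms carry the common exponent $\mu:=1-\alpha\frac{p}{p-1}+\alpha\frac{N}{\beta}$ while the left-hand side carries $\mu+\alpha\kappa_0$, where $\kappa_0=\frac{1}{p-1}-\frac{k}{\beta}>0$ is positive exactly because the hypothesis imposes $k<\beta/(p-1)$; the window $N/2<k<\beta/(p-1)$ is nonempty precisely under the subcriticality assumption $p<1+2\beta/N$. Dividing through by $T^\mu$ leaves, for every $T\in[1,T_w(\varepsilon))$,
$$\varepsilon\,\frac{C_2\,2^{-\frac{N+\beta}{2}}\omega_N}{\lambda_1(k+\beta)}\,T^{\alpha\kappa_0}\le C_6+C_7,$$
and recalling the identity $C_2^{-1}\max\{C_6,C_7\}=C_0A_0\lambda_1^{-\frac{1}{p-1}}$ established in the proof of Theorem~\ref{theorem1}, this rearranges to $T^{\alpha\kappa_0}\le B_0^{\alpha\kappa_0}\,\varepsilon^{-1}$, i.e. $T\le B_0\,\varepsilon^{-\frac{1}{\alpha\kappa_0}}$; since the bound holds for every admissible $T$, it already forces $T_w(\varepsilon)<\infty$. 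To obtain the dichotomy \eqref{48} I would set $\varepsilon_0:=B_0^{\alpha\kappa_0}$: for $\varepsilon\in(0,\varepsilon_0)$ the number $B_0\varepsilon^{-1/(\alpha\kappa_0)}$ is $\ge1$, hence compatible with the constraint $T\ge1$, and the estimate follows directly; for $\varepsilon\ge\varepsilon_0$ the displayed inequality is violated by every $T>1$, so $[1,T_w(\varepsilon))$ must be empty and $T_w(\varepsilon)\le1$.

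The main obstacle I anticipate is precisely the tension between the constraint $R\ge1$—forced because \eqref{55} is only assumed on $\{|x|>1\}$—and the optimal scaling $R=T^{\alpha/\beta}$, since together they confine the sharp bound to the range $T\ge1$ and thereby generate the two-regime conclusion \eqref{48} rather than a single clean estimate. The remaining delicate points are bookkeeping: threading the dependence on $\lambda_1$ through the Young constant $\lambda_1/2$ so that the final power is exactly $\lambda_1^{(p-2)/(p-1)}=\lambda_1^{\,1-\frac{1}{p-1}}$, and checking that the radial integral over $\{|x|>R\}$ reproduces the constant $\omega_N/(k+\beta)$. Everything else transcribes the estimates already set up for Theorem~\ref{theorem1}, so no new analytic input beyond the explicit lower bound above should be needed.
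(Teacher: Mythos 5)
Your proposal is correct and follows essentially the same route as the paper's proof: the same separated test function $\phi_R(x)w(t)$ with Young parameter $\lambda_1/2$, the same tail lower bound $\int_{|x|>R}G_1\phi_R\,dx\ge \lambda_1^{-1}2^{-\frac{N+\beta}{2}}\omega_N(k+\beta)^{-1}R^{N-k}$ (which the paper phrases after rescaling as $K(T)\ge K(1)\ge \omega_N(k+\beta)^{-1}2^{-\frac{N+\beta}{2}}$), the same balancing choice $R=T^{\alpha/\beta}$, and the same threshold $\varepsilon_0=B_0^{\alpha\kappa_0}$ producing the two-regime bound \eqref{48}. The only difference is presentational: the paper establishes $T_w(\varepsilon)\le 1$ for $\varepsilon\ge\varepsilon_0$ by a contradiction argument and then treats $\varepsilon<\varepsilon_0$ separately, whereas you read both regimes off a single inequality valid on $[1,T_w(\varepsilon))$, which changes nothing of substance.
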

\proof Repeating the same calculations as in the proof of Theorem \ref{theorem1}, by taking here $\varepsilon\neq1$, and assuming only
$$\lambda_1>0\quad\hbox{and}\quad G_1(x)\geq\left\{\begin{array}{ll}
\lambda_1^{-1}|x|^{-k},&\,\,\hbox{if}\,\,|x|> 1,\\\\
0,&\,\,\hbox{if}\,\,|x|\leq 1,
\end{array}
\right.
$$
(the other cases can be treated similarly). From \eqref{7}, we obtain 
\begin{equation}\label{19}
\varepsilon \int_{\mathbb{R}^N}G_1(x) \phi(x/T^{\alpha/\beta})\,dx \leq\,C_0\,A_0\,\lambda_1^{-\frac{1}{p-1}}\,T^{\alpha[\frac{N}{\beta}+1-\frac{p}{p-1}]},\qquad\hbox{for all}\,\,0<T<T_w(\varepsilon).
\end{equation}
On the other hand,
\begin{eqnarray*}
\varepsilon\int_{\mathbb{R}^N}G_1(x) \phi(x/T^{\alpha/\beta})\,dx&=&\varepsilon\,T^{\frac{\alpha N}{\beta}}\int_{\mathbb{R}^N}G_1(yT^{\alpha/\beta}) \phi(y)\,dy\\
&\geq& \lambda_1^{-1}\varepsilon\,T^{\frac{\alpha (N-k)}{\beta}}\int_{|y|> T^{-\frac{\alpha N}{\beta}}}|y|^{-k}\phi(y)\,dy\\
&=& \lambda_1^{-1}\varepsilon\,T^{\frac{\alpha (N-k)}{\beta}}K(T),
\end{eqnarray*}
where 
$$K(T):=\int_{|y|> T^{-\frac{\alpha N}{\beta}}}|y|^{-k}\phi(y)\,dy.$$
Therefore, from \eqref{19}, we arrive at
\begin{equation}\label{20}
 \varepsilon\, K(T) \leq\,C_0\,A_0\,\lambda_1^{\frac{p-2}{p-1}}\,T^{\alpha[\frac{k}{\beta}-\frac{1}{p-1}]},\quad\hbox{for all}\,\,0<T<T_w(\varepsilon).
\end{equation}
It remains to estimate from below the last inequality.\\

\noindent First, let $\varepsilon_0=B_0^{\alpha k_0}$, then
$$
T_w(\varepsilon)\leq 1,
$$
for all $\varepsilon\geq \varepsilon_0$. Indeed, suppose on the contrary that there exists $\varepsilon\geq\varepsilon_0$ such that $T_w(\varepsilon)> 1$. Applying \eqref{20} with $\tau\in(1,T_w(\varepsilon))$, we obtain
\begin{equation}\label{21}
 \varepsilon\, K(\tau) \leq\,C_0\,A_0\,\lambda_1^{\frac{p-2}{p-1}}\,\tau^{\alpha[\frac{k}{\beta}-\frac{1}{p-1}]},\quad\hbox{for all}\,\,1<\tau<T_w(\varepsilon).
\end{equation}
Using the fact that
$$|y|\leq(1+|y|^2)^{1/2}\leq \sqrt{2}|y|,\qquad\hbox{for all}\,\,|y|> 1,$$
we have
$$
\frac{\omega_N}{(k+\beta)2^{\frac{N+\beta}{2}}}=2^{-\frac{N+\beta}{2}}\int_{|y|> 1}|y|^{-k-N-\beta}\,dy\leq K(1)\leq\int_{|y|> 1}|y|^{-k-N-\beta}\,dy= \frac{\omega_N}{(k+\beta)}.
$$
Whereupon,
\begin{equation}\label{22}
K(\tau)\geq K(1)\geq \frac{\omega_N}{(k+\beta)2^{\frac{N+\beta}{2}}},\quad\hbox{for all}\,\,1<\tau<T_w(\varepsilon).
\end{equation}
Combining \eqref{21} and \eqref{22}, we obtain
$$
\varepsilon \leq\,(k+\beta)\omega_N^{-1}2^{\frac{N+\beta}{2}}\,C_0\,A_0\,\lambda_1^{\frac{p-2}{p-1}}\,\tau^{\alpha[\frac{k}{\beta}-\frac{1}{p-1}]},
$$
i.e.
$$
\tau\leq B_0\,\varepsilon^{-\frac{1}{\alpha\kappa_0}},\quad\hbox{for all}\,\,1<\tau<T_w(\varepsilon).
$$
Letting $\tau\rightarrow T_w(\varepsilon)$, we get
$$
T_w(\varepsilon)\leq B_0\,\varepsilon^{-\frac{1}{\alpha\kappa_0}}\leq B_0\,\varepsilon_0^{-\frac{1}{\alpha\kappa_0}}=1;
$$
contradiction. Therefore, $T_w(\varepsilon)\leq 1$, for all $\varepsilon\geq \varepsilon_0$.\\

On the other hand, suppose $\varepsilon< \varepsilon_0$. If $T_w(\varepsilon)\leq 1$, it follows that
$$
T_w(\varepsilon)\leq 1= B_0\,\varepsilon_0^{-\frac{1}{\alpha\kappa_0}}\leq B_0\,\varepsilon^{-\frac{1}{\alpha\kappa_0}}.
$$
Hence, it is sufficient to consider $T_w(\varepsilon)> 1$. By the above argument, we get again
$$
T_w(\varepsilon)\leq B_0\,\varepsilon^{-\frac{1}{\alpha\kappa_0}}.
$$
 This completes the proof.\\
${}$\hfill$\square$\\

\begin{rmk}
We note that the condition $k>\frac{N}{2}$ in Theorem \ref{theorem2} is necessary to ensure the existence of at least an $H^s$-function $u_0$ satisfying \eqref{17}, for all $s\geq0$.
\end{rmk}

\section{Theorem 3. Non-existence of global  $L^2$-weak solution for large data}\label{sec5}

\begin{theorem}[Non-existence of global $L^2$-weak solution for $p>1$ and large data]\label{theorem3}${}$\\
Let $0<\alpha<1$, $0<\beta<2$, $N\geq1$, $\varepsilon>0$, and $p>1$. Let $u_0\in H^s(\mathbb{R}^N)$, $s\geq0$, and $u$ be an  $L^2$-weak solution on $[0,T_w(\varepsilon))$. We assume that $u_0$ satisfies
\begin{equation}\label{17}
\lambda_1G_1(x)\quad\hbox{or}\quad  \lambda_2G_2(x)\geq\left\{\begin{array}{ll}
|x|^{-k},&\,\,\hbox{if}\,\,|x|\leq 1,\\\\
0,&\,\,\hbox{if}\,\,|x|> 1,
\end{array}
\right.
\end{equation}
where $k<\min\{\frac{N}{2}-s,\frac{\beta}{p-1}\}$. Then, there exists a constant $\varepsilon_1>0$ such that for any $\varepsilon>\varepsilon_1$, $u$ is not global, i.e. $T_w(\varepsilon)<\infty$. More precisely, 
$$T_w(\varepsilon)\leq \overline{C}\,\varepsilon^{-\frac{1}{\alpha\kappa_0}},$$
for all $\varepsilon>\varepsilon_1$, where $\kappa_0=\frac{1}{p-1}-\frac{k}{\beta}>0$ and
$$ \overline{C}=\left(C_0(N-k)\omega_N^{-1}\,2^{\frac{N+\beta}{2}}A_0\lambda_1^{\frac{p-2}{p-1}}\right)^{\frac{1}{\alpha\kappa_0}}.$$
\end{theorem}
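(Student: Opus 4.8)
The plan is to mirror the proof of Theorem \ref{theorem2} verbatim up to the fundamental test-function inequality, and then replace only the lower bound on the data integral by one adapted to the new support condition \eqref{17}. Concretely, I would first repeat the computations of Theorems \ref{theorem1} and \ref{theorem2}: reduce to Case I by assuming $\lambda_1>0$ and $\lambda_1 G_1(x)\geq |x|^{-k}$ for $|x|\leq1$ (with $\geq0$ for $|x|>1$), the remaining three cases being handled identically; take the real part of the weak formulation; apply the $\varepsilon$-Young inequality \eqref{31} to absorb the $|u|^p$ terms; and insert the test function $\varphi(t,x)=\phi_R(x)w(t)$ with $R=T^{\alpha/\beta}$, $s=\beta/2$, $q=N+\beta$. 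Invoking Lemma \ref{lemma2} and Lemma \ref{lemma5} exactly as before produces the fundamental estimate
$$\varepsilon\int_{\mathbb{R}^N}G_1(x)\,\phi(x/T^{\alpha/\beta})\,dx\leq C_0\,A_0\,\lambda_1^{-\frac{1}{p-1}}\,T^{\alpha[\frac{N}{\beta}-\frac{1}{p-1}]},\qquad 0<T<T_w(\varepsilon),$$
which is \eqref{19} rewritten via $1-\frac{p}{p-1}=-\frac{1}{p-1}$.

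The only genuinely new step is the lower bound for the left-hand side. After the change of variables $x=yT^{\alpha/\beta}$ the integral equals $\varepsilon T^{\alpha N/\beta}\int_{\mathbb{R}^N}G_1(yT^{\alpha/\beta})\phi(y)\,dy$. Since \eqref{17} controls $G_1$ from below on the \emph{inner} region $\{|x|\leq1\}$ rather than the outer region used in Theorem \ref{theorem2}, I would restrict to $\{|y|\leq T^{-\alpha/\beta}\}$, on which $|yT^{\alpha/\beta}|\leq1$, obtaining a lower bound $\lambda_1^{-1}\varepsilon T^{\alpha(N-k)/\beta}\widetilde K(T)$ with $\widetilde K(T):=\int_{|y|\leq T^{-\alpha/\beta}}|y|^{-k}\phi(y)\,dy$. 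Feeding this back into the fundamental estimate and simplifying the powers of $T$ (the exponent collapses to $-\alpha\kappa_0$ with $\kappa_0=\frac{1}{p-1}-\frac{k}{\beta}$, and the constant picks up $\lambda_1^{1-\frac{1}{p-1}}=\lambda_1^{\frac{p-2}{p-1}}$) yields the working inequality $\varepsilon\,\widetilde K(T)\leq C_0\,A_0\,\lambda_1^{\frac{p-2}{p-1}}\,T^{-\alpha\kappa_0}$, the analogue of \eqref{20}.

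It remains to bound $\widetilde K$ from below, and here the orientation of the support condition forces the small-time regime. For $T\leq1$ we have $T^{-\alpha/\beta}\geq1$, so the domain of $\widetilde K(T)$ contains the unit ball and $\widetilde K(T)\geq\widetilde K(1)$; using $\phi(y)=\langle y\rangle^{-N-\beta}\geq 2^{-(N+\beta)/2}$ for $|y|\leq1$ together with the elementary computation $\int_{|y|\leq1}|y|^{-k}\,dy=\omega_N/(N-k)$ (convergent since $k<N/2<N$) gives $\widetilde K(1)\geq 2^{-(N+\beta)/2}\omega_N/(N-k)$. This is exactly where the factor $(N-k)$ in $\overline{C}$ arises, replacing the $(k+\beta)$ of Theorem \ref{theorem2}. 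Finally I would set $\varepsilon_1:=\overline{C}^{\,\alpha\kappa_0}$, so that $\overline{C}\,\varepsilon_1^{-1/(\alpha\kappa_0)}=1$, and argue by contradiction: if $\varepsilon>\varepsilon_1$ but $T_w(\varepsilon)>\overline{C}\,\varepsilon^{-1/(\alpha\kappa_0)}$, then $\overline{C}\,\varepsilon^{-1/(\alpha\kappa_0)}<1$ allows one to pick $\tau$ with $\overline{C}\,\varepsilon^{-1/(\alpha\kappa_0)}<\tau<\min\{1,T_w(\varepsilon)\}$; applying the working inequality at this $\tau<1$ with the lower bound for $\widetilde K$ yields $\tau\leq\overline{C}\,\varepsilon^{-1/(\alpha\kappa_0)}$, a contradiction.

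The main obstacle is precisely this confinement to $T\leq1$. Unlike the outer-region estimate of Theorem \ref{theorem2}, which is monotone for large $\tau$ and hence compatible with small data, here $\widetilde K$ is bounded below by a positive constant only for small $\tau$, so the contradiction scheme closes only when $\overline{C}\,\varepsilon^{-1/(\alpha\kappa_0)}<1$; this is exactly why the hypothesis must be \emph{large data} $\varepsilon>\varepsilon_1$. The condition $\kappa_0>0$ (equivalently $k<\beta/(p-1)$) guarantees that the power of $T$ is strictly negative so the inequality can be inverted, whereas $k<N/2-s$ is needed only to ensure the existence of an admissible datum $u_0\in H^s(\mathbb{R}^N)$ satisfying \eqref{17}.
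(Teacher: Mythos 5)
Your proposal is correct and follows essentially the same route as the paper: the same test function $\phi_R(x)w(t)$ with $R=T^{\alpha/\beta}$, the same change of variables and inner-region lower bound (your $\widetilde K(T)$ is the paper's $L(T)$, with the region written correctly as $|y|\le T^{-\alpha/\beta}$ where the paper carries a typo $T^{-\alpha N/\beta}$), and the same constants --- indeed your choice $\varepsilon_1=\overline{C}^{\,\alpha\kappa_0}$ coincides exactly with the paper's explicit $\varepsilon_1=(N-k)\,\omega_N^{-1}\,2^{\frac{N+\beta}{2}}\,C_0\,A_0\,\lambda_1^{\frac{p-2}{p-1}}$. The only difference is organizational: the paper first proves the claim $T_w(\varepsilon)\le 1$ by applying \eqref{13} at $T=1$ and then lets $T\to T_w(\varepsilon)$ using the monotonicity $L(T)\ge L(1)$, whereas you fold both steps into a single contradiction via an intermediate $\tau<\min\{1,T_w(\varepsilon)\}$, which is logically equivalent.
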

\proof Repeating the same calculations as in the proof of Theorem \ref{theorem1}, by taking here $\varepsilon\neq1$, and considering only the case
$$\lambda_1>0\quad\hbox{and}\quad G_1(x)\geq\left\{\begin{array}{ll}
\lambda_1^{-1}|x|^{-k},&\,\,\hbox{if}\,\,|x|\leq 1,\\\\
0,&\,\,\hbox{if}\,\,|x|> 1,
\end{array}
\right.
$$
as the other cases can be treated similarly. From \eqref{7}, we obtain 
\begin{equation}\label{12}
\varepsilon \int_{\mathbb{R}^N}G_1(x) \phi(x/T^{\alpha/\beta})\,dx \leq\,C_0\,A_0\,\lambda_1^{-\frac{1}{p-1}}\,T^{\alpha[\frac{N}{\beta}+1-\frac{p}{p-1}]},\quad\hbox{for all}\,\,0<T<T_w(\varepsilon).
\end{equation}
On the other hand,
\begin{eqnarray*}
\varepsilon\int_{\mathbb{R}^N}G_1(x) \phi(x/T^{\alpha/\beta})\,dx&=&\varepsilon\,T^{\frac{\alpha N}{\beta}}\int_{\mathbb{R}^N}G_1(yT^{\alpha/\beta}) \phi(y)\,dy\\
&\geq& \lambda_1^{-1}\varepsilon\,T^{\frac{\alpha (N-k)}{\beta}}\int_{|y|\leq T^{-\frac{\alpha N}{\beta}}}|y|^{-k}\phi(y)\,dy\\
&=& \lambda_1^{-1}\varepsilon\,T^{\frac{\alpha (N-k)}{\beta}}L(T),
\end{eqnarray*}
where 
$$L(T):=\int_{|y|\leq T^{-\frac{\alpha N}{\beta}}}|y|^{-k}\phi(y)\,dy.$$
Therefore, from \eqref{12}, we arrive at
\begin{equation}\label{13}
 \varepsilon\, L(T) \leq\,C_0\,A_0\,\lambda_1^{\frac{p-2}{p-1}}\,T^{\alpha[\frac{k}{\beta}-\frac{1}{p-1}]},\quad\hbox{for all}\,\,0<T<T_w(\varepsilon).
\end{equation}
It remains to estimate from below the last inequality.\\

\noindent We claim that there exists a constant $\varepsilon_1>0$ such that for any $\varepsilon>\varepsilon_1$, 
\begin{equation}\label{16}
T_w(\varepsilon)\leq 1.
\end{equation}
Indeed, suppose on the contrary that for all $\varepsilon_1>0$, there exists $\varepsilon>\varepsilon_1$ such that $T_w(\varepsilon)> 1$. Applying \eqref{13} with $T=1$, we have
\begin{equation}\label{14}
\varepsilon\, L(1) \leq\,C_0\,A_0\,\lambda_1^{\frac{p-2}{p-1}}.
\end{equation}
Using the fact that $k<N$, and
$$\frac{1}{2^{\frac{N+\beta}{2}}}\leq \phi(y)\leq 1,\qquad\hbox{for all}\,\,0\leq |y|\leq 1,$$
it is easy to check that
\begin{equation}\label{15}
\frac{\omega_N}{(N-k)2^{\frac{N+\beta}{2}}}\leq L(1)\leq \frac{\omega_N}{(N-k)}.
\end{equation}
Combining \eqref{14} and \eqref{15}, we obtain
$$
\varepsilon \leq\,(N-k)\omega_N^{-1}\,2^{\frac{N+\beta}{2}}\,C_0\,A_0\,\lambda_1^{\frac{p-2}{p-1}}=:\varepsilon_1;
$$
contradiction. Thus the claim is proved.\\

Therefore, for all $T<T_w(\varepsilon)\leq 1$, we have
$$L(T)\geq \int_{|y|\leq 1}|y|^{-k}\phi(y)\,dy=L(1)\geq \frac{\omega_N}{(N-k)2^{\frac{N+\beta}{2}}},$$
which implies, using again \eqref{13},
$$
 \varepsilon\,  \frac{\omega_N}{(N-k)2^{\frac{N+\beta}{2}}} \leq\,C_0\,A_0\,\lambda_1^{\frac{p-2}{p-1}}\,T^{\alpha[\frac{k}{\beta}-\frac{1}{p-1}]},\quad\hbox{for all}\,\,0<T<T_w(\varepsilon),
$$
i.e.
$$T\leq \overline{C}\,\varepsilon^{-\frac{1}{\alpha\kappa_0}},\quad\hbox{for all}\,\,0<T<T_w(\varepsilon).$$
Since $T$ is arbitrary in $(0,T_w(\varepsilon))$, the proof is completed by letting $T\to T_w(\varepsilon)$.\\
${}$\hfill$\square$\\

\begin{rmk}
In Theorem \ref{theorem3}, it is sufficient to just consider the case $p\leq 1+2\beta/(N-2s)$,  because the other case $p>1+2\beta/(N-2s)$ is proved below in Section \ref{sec6}, (non local implies non global existence), and in this case we take $k<\frac{N}{2}-s\,\, (\leq\frac{\beta}{p-1})$.
\end{rmk}

\begin{rmk}
We note that the condition $k<\frac{N}{2}-s$ in Theorem \ref{theorem3} is necessary to ensure the existence of at least an $H^s$-function $u_0$ satisfying \eqref{17}.
\end{rmk}
\section{Theorem 4. Nonexistence of local $L^2$-weak solution in $H^s$-supercritical case}\label{sec6}

\begin{theorem}[Non-existence of local $L^2$-weak solution in $H^s$-supercritical case]\label{theorem4}${}$\\
Let $0<\alpha<1$, $0<\beta<2$, $N\geq1$, $\varepsilon>0$, and $p>1+2\beta/(N-2s)$. Assume $u_0\in H^s(\mathbb{R}^N)$, $0\leq s<N/2$, such that $u_0$ satisfies \eqref{17} with $\beta/(p-1)<k<N/2-s$. Then there is no local $L^2$-weak solution of \eqref{1}.
\end{theorem}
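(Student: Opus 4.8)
The plan is to argue by contradiction, reusing verbatim the integral estimate already derived for Theorem \ref{theorem3}, and to exploit the single structural difference created by the supercritical hypothesis $p>1+2\beta/(N-2s)$: it permits $k$ to be chosen with $\beta/(p-1)<k$, which flips the sign of the power of $T$ appearing in that estimate. This sign change is exactly what upgrades a global-nonexistence statement into a local one, since the contradiction will now be produced by letting $T\to0^+$ rather than by letting $T$ grow.

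First I would suppose, for contradiction, that a local $L^2$-weak solution $u$ of \eqref{1} exists on some interval $[0,T_0]$ with $T_0>0$. Restricting the admissible test functions to $Q_T$ shows that the weak formulation \eqref{weaksolution} holds for every $T\in(0,T_0]$, so the entire chain of estimates in the proofs of Theorems \ref{theorem1} and \ref{theorem3} is available unchanged. Reducing as usual to the case $\lambda_1>0$ with $G_1(x)\geq\lambda_1^{-1}|x|^{-k}$ on $\{|x|\leq1\}$ (the remaining three cases being identical), and running the test-function computation that produced \eqref{13}, I obtain
$$\varepsilon\,L(T)\leq C_0\,A_0\,\lambda_1^{\frac{p-2}{p-1}}\,T^{\alpha[\frac{k}{\beta}-\frac{1}{p-1}]},\qquad 0<T\leq T_0,$$
where $L(T)=\int_{|y|\leq T^{-\alpha N/\beta}}|y|^{-k}\phi(y)\,dy$. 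It is worth stressing that nothing in the derivation of \eqref{7} and \eqref{13} constrains the size of $k$; only the lower bound \eqref{17} on $G_1$ enters, so this inequality holds precisely as before.

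The decisive observation is the range of $k$ in the present theorem: since $\beta/(p-1)<k$ we have $\frac{k}{\beta}-\frac{1}{p-1}>0$, whereas in Theorems \ref{theorem2} and \ref{theorem3} this quantity was negative. Hence the right-hand side above tends to $0$ as $T\to0^+$. On the other hand the integrand of $L$ is nonnegative and the radius $T^{-\alpha N/\beta}$ increases as $T$ decreases, so $L$ is monotone and $L(T)\geq L(1)$ for $0<T<\min(1,T_0)$; moreover, exactly as in \eqref{15} (the integral converging near the origin because $k<N/2-s<N$),
$$L(1)\geq \frac{\omega_N}{(N-k)\,2^{\frac{N+\beta}{2}}}>0.$$
Letting $T\to0^+$ in the displayed inequality then forces $\varepsilon\,L(1)\leq0$, contradicting $\varepsilon>0$ and $L(1)>0$. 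Therefore \eqref{1} admits no local $L^2$-weak solution.

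I do not expect a genuine analytic obstacle here, since the heavy lifting was carried out in Theorem \ref{theorem3}; the argument is essentially sign-tracking. The two points deserving care are purely bookkeeping: verifying that a solution on $[0,T_0]$ indeed yields \eqref{13} for every smaller $T$ (immediate from the support condition on the test functions), and confirming that the hypotheses are consistent, i.e. that the window $\beta/(p-1)<k<N/2-s$ is nonempty. The latter is equivalent to $\beta/(p-1)<N/2-s$, that is to $p>1+2\beta/(N-2s)$, which is exactly the supercriticality assumed in the statement; this is also what guarantees the existence of an admissible $H^s$ datum $u_0$ satisfying \eqref{17}.
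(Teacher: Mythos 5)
Your proposal is correct and follows essentially the same route as the paper's own proof: both reuse the estimate \eqref{13} from Theorem \ref{theorem3}, bound $L(\tau)\geq L(1)\geq \omega_N\,(N-k)^{-1}2^{-\frac{N+\beta}{2}}$ for small $\tau$, and derive the contradiction from the now-positive exponent $\alpha[\frac{k}{\beta}-\frac{1}{p-1}]$ by letting the time parameter tend to $0^+$. Your added checks (nonemptiness of the window $\beta/(p-1)<k<N/2-s$ being equivalent to supercriticality, and convergence of $L(1)$ since $k<N$) are consistent with the paper's remarks and do not change the argument.
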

\proof Suppose that there exists an $L^2$-weak solution $u$ on $[0,T)$ for some $0<T<T_w(\varepsilon)$. Repeating the same proof of Theorem \ref{theorem3}, we have
$$
 \varepsilon\, L(\tau) \leq\,C_0\,A_0\,\lambda_1^{\frac{p-2}{p-1}}\,\tau^{\alpha[\frac{k}{\beta}-\frac{1}{p-1}]},\quad\hbox{for all}\,\,0<\tau<T.
$$
For all $\tau< 1$, we have
$$L(\tau)\geq \int_{|y|\leq 1}|y|^{-k}\phi(y)\,dy=L(1)\geq \frac{\omega_N}{(N-k)2^{\frac{N+\beta}{2}}},$$
whereupon
$$
 \varepsilon\,  \frac{\omega_N}{(N-k)2^{\frac{N+\beta}{2}}} \leq\,C_0\,A_0\,\lambda_1^{\frac{p-2}{p-1}}\,\tau^{\alpha[\frac{k}{\beta}-\frac{1}{p-1}]},\quad\hbox{for all}\,\,0<\tau<\min\{1,T\},
$$
i.e.
$$
 \varepsilon\,   \leq(N-k)\omega_N^{-1}\,2^{\frac{N+\beta}{2}}\,C_0\,A_0\,\lambda_1^{\frac{p-2}{p-1}}\,\tau^{\alpha[\frac{k}{\beta}-\frac{1}{p-1}]},\quad\hbox{for all}\,\,0<\tau<\min\{1,T\}.
$$
As $\beta/(p-1)<k$, we have $k/\beta-1/(p-1)>0$. Therefore, taking $\tau\rightarrow 0^+$, we obtain $\varepsilon=0$; contradiction. This completes the proof.\\
${}$\hfill$\square$\\

\section{Theorem 5. Nonexistence of local $L^1$-weak solution  in the case $p>p_F$}\label{sec7}

\begin{theorem}[Non-existence of local $L^1$-weak solution in the supercritical case]\label{theorem5}${}$\\
Let $0<\alpha<1$, $0<\beta<2$, $N\geq1$, $\varepsilon>0$, and $p>1+\beta/N=p_F$. Assume $u_0\in L^1(\mathbb{R}^N)$ and satisfying \eqref{17} with $\beta/(p-1)<k<N$. Then there is no local $L^1$-weak solution of \eqref{1}.
\end{theorem}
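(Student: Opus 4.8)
The plan is to argue by contradiction, mirroring the proof of Theorem \ref{theorem4} but inside the $L^1$-weak framework of Theorem \ref{theorem1}. Suppose a local $L^1$-weak solution $u$ exists on some interval $[0,T)$ with $T>0$; it suffices to produce a contradiction. As in Theorem \ref{theorem1} I would reduce to Case I, assuming $\lambda_1>0$ together with $G_1(x)\geq\lambda_1^{-1}|x|^{-k}$ for $|x|\leq1$ and $G_1(x)\geq0$ for $|x|>1$ (the remaining three cases are handled identically). Running the estimates \eqref{2}--\eqref{7} with the test function $\varphi(t,x)=\phi_R(x)w(t)$, $R=\tau^{\alpha/\beta}$, and retaining the factor $\varepsilon$ rather than normalizing $\varepsilon=1$, produces the scale-invariant inequality
$$\varepsilon\int_{\mathbb{R}^N}G_1(x)\,\phi(x/\tau^{\alpha/\beta})\,dx\leq C_0\,A_0\,\lambda_1^{-\frac{1}{p-1}}\,\tau^{\alpha[\frac{N}{\beta}+1-\frac{p}{p-1}]},\qquad 0<\tau<T,$$
which is precisely inequality \eqref{12}, now obtained with $Y_T$-test functions.

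Next I would bound the left-hand side from below via \eqref{17}. The change of variables $x=\tau^{\alpha/\beta}y$ gives $\int G_1\,\phi(x/\tau^{\alpha/\beta})\,dx=\tau^{\alpha N/\beta}\int G_1(\tau^{\alpha/\beta}y)\phi(y)\,dy$, and on the region $|\tau^{\alpha/\beta}y|\leq1$ the hypothesis forces $G_1(\tau^{\alpha/\beta}y)\geq\lambda_1^{-1}\tau^{-\alpha k/\beta}|y|^{-k}$. Collecting the powers of $\tau$ and dividing through (using $1-\tfrac{p}{p-1}=-\tfrac{1}{p-1}$) reduces the inequality to the $L^1$-analogue of the key estimate in Theorem \ref{theorem4},
$$\varepsilon\,L(\tau)\leq C_0\,A_0\,\lambda_1^{\frac{p-2}{p-1}}\,\tau^{\alpha[\frac{k}{\beta}-\frac{1}{p-1}]},\qquad L(\tau):=\int_{|y|\leq\tau^{-\alpha/\beta}}|y|^{-k}\phi(y)\,dy.$$
For $0<\tau<1$ the unit ball lies inside the integration domain, so $L(\tau)\geq L(1)$; using $\phi(y)\geq 2^{-(N+\beta)/2}$ on $\{|y|\leq1\}$ and $\int_{|y|\leq1}|y|^{-k}\,dy=\omega_N/(N-k)$ (finite precisely because $k<N$), I obtain the uniform lower bound $L(\tau)\geq \omega_N[(N-k)2^{(N+\beta)/2}]^{-1}$.

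Finally, for $0<\tau<\min\{1,T\}$ the two bounds combine into
$$\varepsilon\,\frac{\omega_N}{(N-k)2^{\frac{N+\beta}{2}}}\leq C_0\,A_0\,\lambda_1^{\frac{p-2}{p-1}}\,\tau^{\alpha[\frac{k}{\beta}-\frac{1}{p-1}]}.$$
Here the hypothesis enters decisively: since $\beta/(p-1)<k$, the exponent $k/\beta-1/(p-1)$ is strictly positive, so the right-hand side tends to $0$ as $\tau\to0^+$ whereas the left-hand side is a fixed positive constant; letting $\tau\to0^+$ forces $\varepsilon=0$, contradicting $\varepsilon>0$, so no local solution can exist. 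I would stress that the two-sided constraint $\beta/(p-1)<k<N$ is simultaneously realizable exactly when $p>1+\beta/N=p_F$, which is where supercriticality is used: the upper bound $k<N$ guarantees $L(1)<\infty$ (equivalently $G_1\in L^1$, consistent with $u_0\in L^1$), while the lower bound $k>\beta/(p-1)$ supplies the positive exponent driving the contradiction. The one delicate point, and the main obstacle, is verifying that the chain \eqref{2}--\eqref{7} of Theorem \ref{theorem1} transfers verbatim to the $L^1$-weak setting with $\varepsilon\neq1$: one must confirm that every integral in the weak formulation \eqref{weaksolution1} is finite for $\varphi\in Y_T$ and that the $\varepsilon$-Young splitting \eqref{31} again absorbs the nonlinear term; granting this, the scaling computation and the limit $\tau\to0^+$ are routine.
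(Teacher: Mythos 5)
Your proposal is correct and follows exactly the paper's route: the paper's proof of Theorem \ref{theorem5} is a one-line reduction (``apply the proof of Theorem \ref{theorem4} step by step, with $k<N$ replacing $k<N/2-s$''), and your argument is precisely that reduction written out --- the same test function $\phi_R(x)w(t)$ with $R=\tau^{\alpha/\beta}$, the same lower bound $L(\tau)\geq L(1)\geq \omega_N[(N-k)2^{(N+\beta)/2}]^{-1}$, and the same contradiction from letting $\tau\to0^+$ using $k/\beta-1/(p-1)>0$. Your closing remark on checking finiteness of the integrals in \eqref{weaksolution1} for $\varphi\in Y_T$ is a point the paper passes over silently, so flagging it is if anything more careful than the original.
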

\proof  Suppose that there exists an $L^1$-weak solution $u$ on $[0,T)$ for some $0<T<\overline{T}_w(\varepsilon)$. Applying the proof of Theorem \ref{theorem4}, step by step. The only difference is the condition $k<N$ instead of $k<N/2-s$, which is required to ensure that there exists an $L^1$-function $u_0$ satisfying \eqref{17}.

\section{Theorem 6. Nonexistence of global $L^2$-weak solution: New approach}\label{sec8}

\begin{theorem}[Nonexistence for global $L^2$-weak solution: New approach]\label{theorem6}${}$\\
Let $0<\alpha<1$, $0<\beta<2$, $N\geq1$, $p>1$, $T>0$, and
$$X(T)=C([0,T),L^2(\mathbb{R}^N))\cap C^1([0,T),H^{-\frac{\beta}{2}}(\mathbb{R}^N))\cap L^\infty((0,T),L^{p}(\mathbb{R}^N)).$$
Assume $u_0\in L^2(\mathbb{R}^N)$ and satisfies
\begin{equation}\label{36}
M_R(0)>C_{N,p,\beta,\gamma}R^{N-\frac{\beta}{p-1}},
\end{equation}
for some $R>0$ and $\gamma\in\mathbb{C}$ satisfying $\text{Re}(\gamma\lambda)>0$,
where
$$M_R(0)=\text{Re}\left(i^\alpha\gamma\int_{\mathbb{R}^N}u(0,x)\phi_R(x)\,dx\right),$$
with $\phi_R(x):=\phi(x/R)$, $R>0$, ($\phi(x)$ is defined in Section \ref{sec2} with $q=N+\beta$),
and
$$
C^p_{N,p,\beta,\lambda,\gamma}=2\,C_{1/2}(\text{Re}\left(\gamma \lambda\right))^{-\frac{p}{p-1}} |\gamma|^{^{\frac{p^2}{p-1}}} A_0^{p}\left(C_{N,N+\beta}\right)^{\frac{p}{p-1}}.
$$
Then there is no distributional solution $u\in X(T)$, with $T>T_b$, for \eqref{1}, where (see \eqref{39})
\begin{equation}\label{44}
T_b\sim \left(\frac{R^{N(p-1)}\Gamma(1+\alpha)}{D_{N,p,\beta,\lambda,\gamma}\,\left[M_R(0)-C_{N,p,\beta,\gamma}R^{N-\frac{\beta}{p-1}}\right]^{p-1}}\right)^{1/\alpha},
\end{equation}
and
$$D_{N,p,\beta,\lambda,\gamma}=2^{-1}\,\text{Re}\left(\gamma \lambda\right) |\gamma|^{-p} A_0^{-(p-1)}.$$
\end{theorem}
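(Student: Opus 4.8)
The plan is to follow the fractional-ODE route announced in the Introduction: pair \eqref{1} against the time-independent test function $\gamma\phi_R$, distill a closed fractional differential inequality for the scalar quantity
$$M_R(t)=\text{Re}\left(i^\alpha\gamma\int_{\mathbb{R}^N}u(t,x)\phi_R(x)\,dx\right),$$
and then feed it into the comparison and blow-up results of Lemma \ref{lemma6} and Proposition \ref{prop1}. First I would record that the pairing is well defined and smooth in time: since $u\in C^1([0,T),H^{-\beta/2})$ while $\phi\in H^\beta\subset H^{\beta/2}$ by Lemma \ref{lemma3}, the map $t\mapsto\langle u(t),\phi_R\rangle$ belongs to $C^1([0,T),\mathbb{C})$, so $M_R\in C^1([0,T),\mathbb{R})$ and is finite on $[0,T)$.

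Next I would derive the governing inequality. Applying $i^\alpha\gamma\int_{\mathbb{R}^N}(\cdot)\,\phi_R\,dx$ to the equation, using that ${}^cD^\alpha_{0|t}$ acts only in $t$ and commutes with the spatial pairing and with $\text{Re}$, transferring $(-\Delta)^{\beta/2}$ onto $\phi_R$ by self-adjointness, and using that $|u|^p$ is real, I obtain
$${}^cD^\alpha_{0|t}M_R(t)=\text{Re}\left(\gamma\int_{\mathbb{R}^N}u\,(-\Delta)^{\beta/2}\phi_R\,dx\right)+\text{Re}(\gamma\lambda)\int_{\mathbb{R}^N}|u|^p\phi_R\,dx,$$
where the second term is nonnegative because $\text{Re}(\gamma\lambda)>0$. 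For the first term I would bound it by $|\gamma|\int|u|\,|(-\Delta)^{\beta/2}\phi_R|\,dx$ and apply the $\varepsilon$-Young inequality \eqref{31} with $\varepsilon=1/2$ after the normalization $|\gamma|\,|u|\,|(-\Delta)^{\beta/2}\phi_R|=\big(\text{Re}(\gamma\lambda)^{1/p}|u|\phi_R^{1/p}\big)\big(\text{Re}(\gamma\lambda)^{-1/p}|\gamma|\phi_R^{-1/p}|(-\Delta)^{\beta/2}\phi_R|\big)$, so that exactly $\tfrac12\text{Re}(\gamma\lambda)$ of the nonlinear coefficient is absorbed and the remainder is controlled through Lemma \ref{lemma5} (with $s=\beta/2$, $q=N+\beta$) by a multiple of $R^{N-\beta p/(p-1)}$; this is where $C_{1/2}$ enters. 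For the nonlinear term I would use H\"older with exponents $p$ and $p/(p-1)$ together with $\int\phi_R\,dx=A_0R^N$ to get $\int|u|^p\phi_R\,dx\ge (M_R)_+^p\big/\big(|\gamma|^pA_0^{p-1}R^{N(p-1)}\big)$. Combining the two estimates gives
$${}^cD^\alpha_{0|t}M_R(t)\ge B\big[(M_R(t))_+^p-A\big],\qquad B=D_{N,p,\beta,\lambda,\gamma}R^{-N(p-1)},\quad A=C^p_{N,p,\beta,\lambda,\gamma}R^{p(N-\beta/(p-1))},$$
and a bookkeeping of constants reproduces exactly the displayed $D_{N,p,\beta,\lambda,\gamma}$ and $C^p_{N,p,\beta,\lambda,\gamma}$, with $A^{1/p}=C_{N,p,\beta,\gamma}R^{N-\beta/(p-1)}$.

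Since \eqref{36} is precisely $M_R(0)>A^{1/p}$, I would run the continuity argument of Lemma \ref{lemma6} on the inequality above (wherever $M_R>A^{1/p}>0$ one has $(M_R)_+^p=M_R^p$) to conclude $M_R(t)\ge A^{1/p}$ for all $t\in[0,T)$, which upgrades the bound to ${}^cD^\alpha_{0|t}M_R\ge B[M_R^p-A]$. Setting $v:=M_R-A^{1/p}\ge0$, using ${}^cD^\alpha_{0|t}A^{1/p}=0$ and the superadditivity $(A^{1/p}+v)^p\ge A+v^p$ valid for $p>1$ and $v\ge0$, I arrive at
$${}^cD^\alpha_{0|t}v(t)\ge B\,v^p(t),\qquad v(0)=M_R(0)-C_{N,p,\beta,\gamma}R^{N-\beta/(p-1)}>0,$$
which is exactly the hypothesis \eqref{57} of Proposition \ref{prop1}. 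Hence $v$ blows up at the finite time $T_b$ given by \eqref{39}, and substituting $B=D_{N,p,\beta,\lambda,\gamma}R^{-N(p-1)}$ and the above value of $v(0)$ reproduces \eqref{44}. But a solution $u\in X(T)$ with $T>T_b$ keeps $M_R$, hence $v$, finite and continuous on $[0,T)\ni T_b$, contradicting $\lim_{t\to T_b^-}v(t)=+\infty$; therefore no such solution can exist.

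The step I expect to be most delicate is the rigorous passage from the distributional formulation to the scalar identity for ${}^cD^\alpha_{0|t}M_R$: one must justify commuting the Caputo operator (that is, $I^{1-\alpha}_{0|t}$ applied to $\partial_t u$) with the pairing against $\phi_R$, transfer $(-\Delta)^{\beta/2}$ onto $\phi_R$ in the $H^{-\beta/2}$--$H^{\beta/2}$ duality, and verify the integrability of every term, which rests on $u\in C([0,T),L^2)\cap L^\infty((0,T),L^p)$, on $\phi\in H^\beta$, and on $(-\Delta)^{\beta/2}\phi_R\in L^{p/(p-1)}$ supplied by Lemma \ref{lemma5}. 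A secondary subtlety is the sign bootstrap: the H\"older lower bound is only valid where $M_R\ge0$, so the Lemma \ref{lemma6} argument must be carried out first in order to replace $(M_R)_+$ by $M_R$ before Proposition \ref{prop1} is invoked.
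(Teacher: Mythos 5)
Your proposal is correct and follows essentially the same route as the paper's proof: pairing the equation with $\gamma\phi_R$, absorbing the linear term via the $\tfrac12$-Young inequality, the H\"older lower bound $\int|u|^p\phi_R\geq |\gamma|^{-p}A_0^{-(p-1)}R^{-N(p-1)}|M_R|^p$, Lemma \ref{lemma6} to keep $M_R$ above the threshold $C_{N,p,\beta,\gamma}R^{N-\frac{\beta}{p-1}}$, the elementary inequality $a^p-b^p\geq(a-b)^p$, and Proposition \ref{prop1} played against the bound $M_R(t)\leq\|u\|_{L^\infty((0,T),L^2)}\|\phi_R\|_{L^2}<\infty$. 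The only immaterial deviations are that the paper applies the pointwise bound of Lemma \ref{lemma3} directly (rather than routing through Lemma \ref{lemma5}, which rests on the same estimate) and retains $|M_R|^p$ from H\"older instead of $(M_R)_+^p$, so Lemma \ref{lemma6} applies verbatim and the ``sign bootstrap'' subtlety you flag does not actually arise.
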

\begin{proof} Suppose, on the contrary, that there exists a distributional solution $u\in X(T)$ with $T>T_b$. Let 
$$M_R(t)=\text{Re}\left(i^\alpha\gamma\int_{\mathbb{R}^N}u(t,x)\phi_R(x)\,dx\right).$$
By Lemmas \ref{lemma3} and \ref{lemma4}, we have
\begin{eqnarray}\label{32}
{}^cD^\alpha_{t|T}M_R(t)&=&\text{Re}\left(\gamma\int_{\mathbb{R}^N}i^\alpha\,{}^cD^\alpha_{t|T} u(t,x)\phi_R(x)\,dx\right)\nonumber\\
&=&\text{Re}\left(\gamma \lambda\right)\int_{\mathbb{R}^N}|u(t,x)|^p\phi_R(x)\,dx +\,R^{-\beta}\text{Re}\left(\gamma\int_{\mathbb{R}^N}u(t,x)\left((-\Delta)^{\beta/2}\phi\right)(x/R)\,dx\right)\nonumber\\
&\geq&\text{Re}\left(\gamma \lambda\right)\int_{\mathbb{R}^N}|u(t,x)|^p\phi(x/R)\,dx-\,C_{N,N+\beta}\,R^{-\beta}|\gamma|\int_{\mathbb{R}^N}|u(t,x)|\phi(x/R)\,dx.
\end{eqnarray}
In order to get a differential inequality, we start by estimating the second term in the right hand side of \eqref{32}. Using $1/2$-Young's inequality \eqref{31}, we obtain
\begin{eqnarray}\label{33}
&{}&C_{N,N+\beta}\,R^{-\beta}|\gamma|\int_{\mathbb{R}^N}|u(t,x)|\phi(x/R)\,dx\nonumber\\
&{}&=\int_{\mathbb{R}^N}|u(t,x)|\left[\text{Re}\left(\gamma \lambda\right)\phi(x/R)\right]^{1/p}C_{N,N+\beta}\,R^{-\beta}|\gamma|\left[\text{Re}\left(\gamma \lambda\right)\right]^{-1/p}\left[\phi(x/R)\right]^{(p-1)/p}\,dx\nonumber\\
&{}&\leq\frac{1}{2}\,\text{Re}\left(\gamma \lambda\right)\int_{\mathbb{R}^N}|u(t,x)|^p\phi(x/R)\,dx\nonumber\\
&{}&\quad+\,C_{1/2}|\gamma|^{\frac{p}{p-1}}\left(C_{N,N+\beta}\right)^{\frac{p}{p-1}}R^{-\frac{\beta p}{p-1}}\left(\text{Re}\left(\gamma \lambda\right)\right)^{-\frac{1}{p-1}}\int_{\mathbb{R}^N}\phi(x/R)\,dx\nonumber\\
&{}&=\frac{1}{2}\,\text{Re}\left(\gamma \lambda\right)\int_{\mathbb{R}^N}|u(t,x)|^p\phi(x/R)\,dx\nonumber\\
&{}&\quad+\,C_{1/2}\,A_0|\gamma|^{\frac{p}{p-1}}\left(\text{Re}\left(\gamma \lambda\right)\right)^{-\frac{1}{p-1}}\left(C_{N,N+\beta}\right)^{\frac{p}{p-1}}R^{N-\frac{\beta p}{p-1}},
\end{eqnarray}
where
$$C_{1/2}=(p-1)p^{-\frac{p}{p-1}}2^{\frac{1}{p-1}}\quad\hbox{and}\quad A_0=\int_{\mathbb{R}^N}\phi(\tilde{x})\,d\tilde{x}.$$
On the other hand, by estimating the first term in the right hand side of \eqref{32}  by using H\"{o}lder's inequality, we get
\begin{eqnarray*}
|M_R(t)|&=&\left|\text{Re}\left(i^\alpha\gamma\int_{\mathbb{R}^N}u(t,x)\phi(x/R)\,dx\right)\right|\\
&\leq&|\gamma|\int_{\mathbb{R}^N}|u(t,x)|\phi(x/R)\,dx\\
&=&|\gamma|\int_{\mathbb{R}^N}|u(t,x)|(\phi(x/R))^{\frac{1}{p}}(\phi(x/R))^{\frac{p-1}{p}}\,dx\\
&\leq&|\gamma|\left(\int_{\mathbb{R}^N}|u(t,x)|^p\phi(x/R)\,dx\right)^{\frac{1}{p}}\left(\int_{\mathbb{R}^N}\phi(x/R)\,dx\right)^{\frac{p-1}{p}}\\
&=&|\gamma| A_0^{\frac{p-1}{p}}R^{\frac{N(p-1)}{p}}\left(\int_{\mathbb{R}^N}|u(t,x)|^p\phi(x/R)\,dx\right)^{\frac{1}{p}},
\end{eqnarray*}
i.e. 
\begin{equation}\label{34}
\int_{\mathbb{R}^N}|u(t,x)|^p\phi(x/R)\,dx\geq |\gamma|^{-p} A_0^{-(p-1)}R^{-N(p-1)}|M_R(t)|^p.
\end{equation}
Inserting \eqref{33}-\eqref{34} into \eqref{32}, we conclude that
\begin{eqnarray*}
{}^cD^\alpha_{t|T}M_R(t)&\geq&2^{-1}\,\text{Re}\left(\gamma \lambda\right)\int_{\mathbb{R}^N}|u(t,x)|^p\phi(x/R)\,dx\nonumber\\
&{}&-\,C_{1/2}\,A_0|\gamma|^{\frac{p}{p-1}}\left(\text{Re}\left(\gamma \lambda\right)\right)^{-\frac{1}{p-1}}\left(C_{N,N+\beta}\right)^{\frac{p}{p-1}}R^{N-\frac{\beta p}{p-1}}\\
&\geq&2^{-1}\,\text{Re}\left(\gamma \lambda\right) |\gamma|^{-p} A_0^{-(p-1)}R^{-N(p-1)}|M_R(t)|^p\\
&{}&-\,C_{1/2}\,A_0|\gamma|^{\frac{p}{p-1}}\left(\text{Re}\left(\gamma \lambda\right)\right)^{-\frac{1}{p-1}}\left(C_{N,N+\beta}\right)^{\frac{p}{p-1}}R^{N-\frac{\beta p}{p-1}}\\
&=&2^{-1}\,\text{Re}\left(\gamma \lambda\right) |\gamma|^{-p} A_0^{-(p-1)}R^{-N(p-1)}\left[|M_R(t)|^p-C^p_{N,p,\beta,\lambda,\gamma}R^{p(N-\frac{\beta}{p-1})}\right],
\end{eqnarray*}
i.e.
\begin{equation}\label{35}
{}^cD^\alpha_{t|T}M_R(t)\geq D_{N,p,\beta,\lambda,\gamma}R^{-N(p-1)}\left[|M_R(t)|^p-C^p_{N,p,\beta,\gamma}R^{p(N-\frac{\beta}{p-1})}\right].
\end{equation}
Applying Lemma \ref{lemma6} and using \eqref{36}, we conclude that
\begin{equation}\label{37}
M_R(t)\geq C_{N,p,\beta,\gamma}R^{N-\frac{\beta}{p-1}}>0,\qquad\text{for all}\,\, t\in[0,T),
\end{equation}
which implies, by using \eqref{35} and  the following elementary inequality 
$$a^p-b^p\geq (a-b)^p,\qquad \text{for all}\,\,a>b\geq 0,\,p>1,$$
that 
\begin{equation}\label{38}
{}^cD^\alpha_{t|T}M_R(t)\geq D_{N,p,\beta,\lambda,\gamma}R^{-N(p-1)}\left[M_R(t)-C_{N,p,\beta,\gamma}R^{N-\frac{\beta}{p-1}}\right]^p.
\end{equation}
Apply Proposition \ref{prop1} and the fact that ${}^cD^\alpha_{t|T} C=0$, for any constant $C>0$, we infer that 
$$\lim_{t\rightarrow T_b}M_R(t)=+\infty.$$
Since
$$M_R(t)\leq \|u(t)\|_{L^\infty((0,T),L^2(\mathbb{R}^N))}\|\phi(\cdotp/R)\|_{L^2(\mathbb{R}^N)}<\infty,\quad\text{for all}\,\,t\in[0,T),$$
we get a contradiction, and this completes the proof.
\end{proof}

\begin{rmk}
Note that, from \eqref{40}, we have $H(p,\alpha)=\max(p-1,2^{-\frac{p\alpha}{p-1}})\geq p-1$; this implies that
$T_b$ can be chosen as
$$T_b =\left(\frac{R^{N(p-1)}\Gamma(1+\alpha)}{(p-1)D_{N,p,\beta,\lambda,\gamma}\,\left[M_R(0)-C_{N,p,\beta,\gamma}R^{N-\frac{\beta}{p-1}}\right]^{p-1}}\right)^{1/\alpha},$$
which is the same blow-up time as in the ordinary differential equation when $\alpha=1$.
\end{rmk}

\begin{corollary}[Theorem~1: New approach]\label{coro1}${}$\\
Let $0<\alpha<1$, $0<\beta<2$, $N\geq1$, $\gamma\in\mathbb{C}$, $\varepsilon=1$, $p>1$. Assume that $p<1+\beta/N$, and $u_0\in L^1(\mathbb{R}^N)\cap L^2(\mathbb{R}^N)$ satisfies
\begin{equation}\label{41}
\text{Re}(\gamma\lambda)>0\qquad\text{and}\qquad\text{Re}\left(i^\alpha\gamma\int_{\mathbb{R}^N}u_0(x)\,dx\right)>0.
\end{equation}
Then there is no distributional solution $u\in X(T)$ to \eqref{1} for sufficiently large $T>0$.
\end{corollary}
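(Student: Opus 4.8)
The plan is to deduce this directly from Theorem \ref{theorem6}. Since $\varepsilon=1$, the initial condition in \eqref{1} gives $u(0,x)=u_0(x)$, so the quantity $M_R(0)$ appearing in Theorem \ref{theorem6} is built from $u_0$. The prescribed $\gamma$ already satisfies $\text{Re}(\gamma\lambda)>0$ by the first part of \eqref{41}, so the only thing left to verify is the structural hypothesis \eqref{36} for a suitable choice of $R$. Recalling that
$$M_R(0)=\text{Re}\left(i^\alpha\gamma\int_{\mathbb{R}^N}u_0(x)\phi_R(x)\,dx\right),\qquad \phi_R(x)=\phi(x/R)=\langle x/R\rangle^{-(N+\beta)},$$
my strategy is to send $R\to\infty$ and compare the two sides of \eqref{36} asymptotically.

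For the left-hand side, I would use that $0<\phi_R(x)\leq 1$ for every $x$ and that $\phi_R(x)\to\phi(0)=1$ pointwise as $R\to\infty$. Because $u_0\in L^1(\mathbb{R}^N)$, the integrand is dominated by $|u_0|\in L^1(\mathbb{R}^N)$, so the dominated convergence theorem yields
$$\lim_{R\to\infty}M_R(0)=\text{Re}\left(i^\alpha\gamma\int_{\mathbb{R}^N}u_0(x)\,dx\right)=:M_\infty,$$
and $M_\infty>0$ by the second part of \eqref{41}. For the right-hand side, the key observation is that the subcriticality assumption $p<1+\beta/N$ is exactly equivalent to $N(p-1)<\beta$, i.e. to the exponent being negative,
$$N-\frac{\beta}{p-1}<0.$$
Hence $C_{N,p,\beta,\gamma}R^{N-\frac{\beta}{p-1}}\to 0$ as $R\to\infty$.

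Combining the two limits, for all $R$ sufficiently large we have $M_R(0)>C_{N,p,\beta,\gamma}R^{N-\frac{\beta}{p-1}}$, so \eqref{36} holds. Fixing one such $R$ and invoking Theorem \ref{theorem6} produces the finite blow-time $T_b$ of \eqref{44}, and the theorem then excludes any distributional solution $u\in X(T)$ for every $T>T_b$. Since $T_b<\infty$, this is exactly nonexistence for all sufficiently large $T$, which is the claimed conclusion.

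There is no serious obstacle here; the argument is a short asymptotic matching of the two sides of \eqref{36}. The only points demanding care are the alignment of signs and the role of the integrability hypotheses: one must recognize that the Fujita-type condition $p<p_F=1+\beta/N$ is precisely what forces $N-\beta/(p-1)<0$, and that $u_0\in L^1(\mathbb{R}^N)$ (rather than merely $L^2$) is what legitimizes the dominated-convergence passage. The additional assumption $u_0\in L^2(\mathbb{R}^N)$ is needed only so that $u_0$ is an admissible datum for the solution class $X(T)$ used in Theorem \ref{theorem6}.
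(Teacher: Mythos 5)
Your proposal is correct and follows essentially the same route as the paper: dominated convergence (justified by $u_0\in L^1(\mathbb{R}^N)$) gives $M_R(0)\to \text{Re}\left(i^\alpha\gamma\int_{\mathbb{R}^N}u_0\,dx\right)>0$, while the subcriticality $p<1+\beta/N$, equivalent to $N-\frac{\beta}{p-1}<0$, forces the right-hand side of \eqref{36} to vanish as $R\to\infty$, so \eqref{36} holds for some fixed large $R$ and Theorem \ref{theorem6} applies. Your added remarks on the sign bookkeeping and on why $L^2$ membership is only needed for admissibility in $X(T)$ are accurate but do not change the argument.
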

\begin{proof}  
By \eqref{41}, using the dominated convergence theorem, we conclude that
$$\lim_{R\to\infty}M_R(0)=\text{Re}\left(i^\alpha\gamma\int_{\mathbb{R}^N}u_0(x)\,dx\right)>0.$$
On the other hand, as $p<1+\beta/N$, 
$$C_{N,p,\beta,\gamma}R^{N-\frac{\beta}{p-1}}\longrightarrow0,\qquad\text{when}\,\, R\to\infty.$$
Therefore, there exists $R_0>0$ such that condition \eqref{36} is satisfied. Using Theorem \ref{theorem6}, the proof is completed.
\end{proof}
\begin{rmk}
Note that, by taking $\gamma=\pm 1, \pm i$ in Corollary \ref{coro1},  condition \eqref{41} implies \eqref{42}, which means that  \eqref{41} is more general that  \eqref{42}. Therefore, in the subcritical case, Theorem \ref{theorem1} can be seen as a particular case of Corollary \ref{coro1}, but with different regularity.
\end{rmk}

\begin{corollary}[Theorem~2: New approach]\label{coro2}${}$\\
Let $0<\alpha<1$, $0<\beta<2$, $N\geq1$, $\varepsilon>0$, $\gamma\in\mathbb{C}$, and $p>1$. Assume that $p<1+2\beta/N$, and $u_0\in H^s(\mathbb{R}^N)$, $s\geq0$, satisfies
\begin{equation}\label{43}
\text{Re}(\gamma\lambda)>0\qquad\text{and}\qquad\text{Re}\left(i^\alpha\gamma u_0(x)\right)\geq\left\{\begin{array}{ll}
|x|^{-k},&\,\,\hbox{if}\,\,|x|> 1,\\\\
0,&\,\,\hbox{if}\,\,|x|\leq 1,
\end{array}
\right.
\end{equation}
where $N/2<k<\frac{\beta}{p-1}$. Then, there exists a constant $\varepsilon_2>0$ such that for all $\varepsilon\in(0,\varepsilon_2]$, there is no distributional solution $u\in X(T)$ to \eqref{1} for sufficiently large $T>T_b$ with $T_b$ defined in \eqref{44}. Moreover $T_b$ can be estimated as follows
\begin{equation}\label{47}
T_b\leq B_1\,\varepsilon^{-\frac{1}{\alpha\kappa_1}},
\end{equation}
for all $\varepsilon\in(0,\varepsilon_2]$, where $\kappa_1=\frac{1}{p-1}-\frac{\min(N,k)}{\beta}>0$,
$$B_1=(p-1)^{-1/\alpha} D_{N,p,\beta,\lambda,\gamma}^{-1/\alpha}\Gamma(1+\alpha)^{1/\alpha}\,2^{\frac{1}{\alpha\kappa_1}}\,(C_{N,p,\beta,\gamma})^{\frac{\min(N,k)(p-1)}{\alpha\beta\kappa_1}}\,I_1^{-\frac{1}{\alpha\kappa_1}},$$
and
$$I_1:=\left\{\begin{array}{ll}
2^{-N-\beta-1}\omega_N(N-k)^{-1}R^{N-k},&\,\,\hbox{if}\,\,k<N,\\\\
\displaystyle 2^{-N-\beta}\omega_N\int_1^2r^{N-1-k}\,dr,&\,\,\hbox{if}\,\,k\geq N.
\end{array}
\right.
$$
\end{corollary}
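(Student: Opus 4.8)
The plan is to deduce Corollary \ref{coro2} directly from Theorem \ref{theorem6}: the content of the corollary is to verify the threshold condition \eqref{36} under the pointwise hypothesis \eqref{43} and then to translate the abstract blow-up time \eqref{44} into the explicit rate \eqref{47}. First I would record that, since the initial datum of \eqref{1} is $\varepsilon u_0$ and $\phi_R$ is real-valued, the quantity entering \eqref{36} factorizes as
$$M_R(0)=\varepsilon\int_{\mathbb{R}^N}\text{Re}\left(i^\alpha\gamma\,u_0(x)\right)\phi_R(x)\,dx.$$
The first requirement of \eqref{43}, $\text{Re}(\gamma\lambda)>0$, is precisely the hypothesis Theorem \ref{theorem6} needs, so the only genuine task is to secure the size condition \eqref{36}.

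Next I would bound $M_R(0)$ from below using the second inequality in \eqref{43}. Discarding the nonnegative contribution of $\{|x|\le1\}$ gives $M_R(0)\ge\varepsilon\int_{|x|>1}|x|^{-k}\phi_R(x)\,dx$, and since $\phi_R(x)=(1+|x/R|^2)^{-(N+\beta)/2}$ is bounded below by a fixed constant on a suitable annulus, one obtains $M_R(0)\ge\varepsilon\,I_1$. The two regimes of $I_1$ enter here: for $k<N$ the radial integral $\int_1^R r^{N-1-k}\,dr$ grows like $R^{N-k}$, so I would integrate over $1<|x|<R$ and $I_1$ carries the factor $R^{N-k}$; for $k\ge N$ the radial integral converges, so one simply integrates over $1<|x|<2$ and $I_1$ is a pure constant. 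In both cases the growth of the lower bound is $R^{\,N-\min(N,k)}$, which is the decisive gain.

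The key observation is then the comparison of exponents. Because $k<\beta/(p-1)$ — and in the case $k\ge N$ this forces $N\le k<\beta/(p-1)$ as well — one has $\min(N,k)<\beta/(p-1)$, so that $\kappa_1=\tfrac1{p-1}-\tfrac{\min(N,k)}{\beta}>0$ and, crucially, $N-\min(N,k)>N-\tfrac{\beta}{p-1}$. Hence the lower bound $\varepsilon\,I_1\sim\varepsilon\,R^{\,N-\min(N,k)}$ outgrows the threshold $C_{N,p,\beta,\gamma}R^{N-\beta/(p-1)}$ as $R\to\infty$. I would therefore choose $R=R(\varepsilon)$ so that $C_{N,p,\beta,\gamma}R^{N-\beta/(p-1)}\le\tfrac12\varepsilon\,I_1$; solving the corresponding equality gives $R\sim\varepsilon^{-1/(\beta\kappa_1)}$. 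This both verifies \eqref{36} and produces the margin $M_R(0)-C_{N,p,\beta,\gamma}R^{N-\beta/(p-1)}\ge\tfrac12\varepsilon\,I_1\ge C_{N,p,\beta,\gamma}R^{N-\beta/(p-1)}$. The constant $\varepsilon_2$ enters exactly at this point: for $\varepsilon\le\varepsilon_2$ the selected $R(\varepsilon)$ is large enough (e.g. $R\ge2^{1/(N-k)}$) that the elementary estimate $R^{N-k}-1\ge\tfrac12R^{N-k}$ used in forming $I_1$ is legitimate.

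Finally I would feed this margin into \eqref{44}, in the sharper form of the Remark following Theorem \ref{theorem6} that carries the factor $p-1$ and thereby accounts for the $(p-1)^{-1/\alpha}$ in $B_1$. Substituting the lower bound for $M_R(0)-C_{N,p,\beta,\gamma}R^{N-\beta/(p-1)}$ collapses the $R$-dependence of $T_b$ to the single power $R^{\beta}$, whence $T_b\le(\cdots)\,R^{\beta/\alpha}$; inserting $R\sim\varepsilon^{-1/(\beta\kappa_1)}$ yields the clean rate $\varepsilon^{-\beta/(\alpha\beta\kappa_1)}=\varepsilon^{-1/(\alpha\kappa_1)}$ of \eqref{47}, with $B_1$ assembling the constants $\Gamma(1+\alpha)$, $D_{N,p,\beta,\lambda,\gamma}$, $C_{N,p,\beta,\gamma}$ and $I_1$. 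I expect the main obstacle to be bookkeeping rather than conceptual: keeping the two cases $k<N$ and $k\ge N$ aligned while tracking how the $R$- and $\varepsilon$-dependences combine, so that the exponent of $\varepsilon$ comes out as exactly $-1/(\alpha\kappa_1)$ and the constant $B_1$ is reproduced.
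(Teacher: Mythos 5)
Your proposal is correct and follows essentially the same route as the paper's own proof: the same lower bound for $M_R(0)$ over the annulus $1\leq|x|\leq R$ split into the cases $k<N$ and $k\geq N$, the same balancing choice $R=\left(2C_{N,p,\beta,\gamma}/(\varepsilon I_1)\right)^{1/(\beta\kappa_1)}$ with $\varepsilon_2$ calibrated so that $R\geq 2^{1/(N-k)}$ (resp. $R\geq 2$) keeps the estimate $R^{N-k}-1\geq R^{N-k}/2$ valid, and the same appeal to Theorem \ref{theorem6} combined with $H(p,\alpha)\geq p-1$ from \eqref{39}--\eqref{44} to produce the factor $(p-1)^{-1/\alpha}$ in $B_1$. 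Your exponent bookkeeping (collapsing the blow-up time to $R^{\beta/\alpha}$ before substituting $R\sim\varepsilon^{-1/(\beta\kappa_1)}$) is algebraically equivalent to the paper's substitution of \eqref{45} and \eqref{46} into \eqref{44}, and it correctly reproduces the rate $\varepsilon^{-1/(\alpha\kappa_1)}$.
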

\begin{proof} In order to apply Theorem \ref{theorem6}, we need to estimate $M_R(0)$ from below, for some $R>0$. Let
$$\varepsilon_2=\left\{\begin{array}{ll}
I_1^{-1}C_{N,p,\beta,\gamma}2^{1-\frac{\beta\kappa_1}{N-k}},&\,\,\hbox{if}\,\,k<N,\\\\
 I_1^{-1}C_{N,p,\beta,\gamma}2^{1-\beta\kappa_1},&\,\,\hbox{if}\,\,k\geq N.
\end{array}
\right.
$$
Let $\varepsilon\in(0,\varepsilon_2]$. We choose $R=R(\varepsilon)$ such that
\begin{equation}\label{50}
\left\{\begin{array}{ll}
R\geq 2^{1/(N-k)},&\,\,\hbox{if}\,\,k<N,\\\\
R\geq 2,&\,\,\hbox{if}\,\,k\geq N.
\end{array}
\right.
\end{equation}
Then, as $R^{N-k}-1\geq R^{N-k}/2$, when $k<N$, using \eqref{43}, we have
 \begin{eqnarray*}
M_R(0)&\geq&\varepsilon\,\text{Re}\left(\gamma\, i^{\alpha}\int_{\mathbb{R}^N}u_0(x)\phi(x/R)\,dx\right)\\
&\geq& \varepsilon \int_{|x|\geq1}|x|^{-k}\phi(x/R)\,dx\\
&\geq& \varepsilon \int_{1\leq|x|\leq R}|x|^{-k}\phi(x/R)\,dx\\
&\geq& \varepsilon 2^{-N-\beta} \int_{1\leq|x|\leq R}|x|^{-k}\,dx\\
&=& \varepsilon 2^{-N-\beta} \omega_N\int_1^ R r^{N-1-k}\,dr\\
&\geq& \varepsilon 2^{-N-\beta} \omega_N\left\{\begin{array}{ll}
(N-k)^{-1}(R^{N-k}-1),&\,\,\hbox{if}\,\,k<N,\\\\
\displaystyle \int_1^ 2 r^{N-1-k}\,dr,&\,\,\hbox{if}\,\,k\geq N,
\end{array}
\right.\\
&\geq& \varepsilon\,I_1\,R^{(N-k)_+},
\end{eqnarray*}
with $(N-k)_+=\max(N-k,0)$. Therefore
\begin{equation}\label{45}
M_R(0)-C_{N,p,\beta,\gamma}R^{N-\frac{\beta}{p-1}}\geq R^{(N-k)_+}\left(\varepsilon\,I_1-C_{N,p,\beta,\gamma}R^{-\beta\kappa_1}\right)= R^{(N-k)_+}\left(\frac{\varepsilon\,I_1}{2}\right)>0,
\end{equation}
where $R$ is chosen to ensure the last equality, namely
\begin{equation}\label{46}
R=\left(\frac{2C_{N,p,\beta,\gamma}}{\varepsilon\,I_1}\right)^{\frac{1}{\beta\kappa_1}}.
\end{equation}
It is clear, by our choice of $\varepsilon_2$, that condition \eqref{50} is satisfied. Applying Theorem \ref{theorem6}, we conclude that there is no solution $u\in X(T)$ to \eqref{1} for all $T>T_b$. Moreover, from \eqref{39},\eqref{44}  and the fact that $ H(p,\alpha)\geq p-1$, we obtain
$$T_b\leq T_U\leq(p-1)^{-1/\alpha}  \left(\frac{R^{N(p-1)}\Gamma(1+\alpha)}{D_{N,p,\beta,\lambda,\gamma}\,\left[M_R(0)-C_{N,p,\beta,\gamma}R^{N-\frac{\beta}{p-1}}\right]^{p-1}}\right)^{1/\alpha}.$$
Then, using \eqref{45} and \eqref{46}, we conclude that
$$T_b\leq B_1\,\varepsilon^{-\frac{1}{\alpha\kappa_1}}.$$
 This complete the proof.
\end{proof}
\begin{rmk}
Note that, $\kappa_1>\kappa_0$, this means that \eqref{48} is better than \eqref{47}. Moreover, by taking $\gamma=\pm 1, \pm i$ in Corollary \ref{coro2},  condition \eqref{43} implies \eqref{55}, which means that  \eqref{43} is more general that  \eqref{55}. Therefore, Theorem \ref{theorem2} can be seen as a particular case of Corollary \ref{coro2}, but with different regularity.
\end{rmk}

\begin{corollary}[Theorem~3: New approach]\label{coro3}${}$\\
Let $0<\alpha<1$, $0<\beta<2$, $N\geq1$, $\varepsilon>0$, $\gamma\in\mathbb{C}$, and $p>1$. Assume that $u_0\in H^s(\mathbb{R}^N)$, $s\geq0$, satisfies
\begin{equation}\label{56}
\text{Re}(\gamma\lambda)>0\qquad\text{and}\qquad\text{Re}\left(i^\alpha\gamma u_0(x)\right)\geq\left\{\begin{array}{ll}
|x|^{-k},&\,\,\hbox{if}\,\,|x|\leq 1,\\\\
0,&\,\,\hbox{if}\,\,|x|> 1,
\end{array}
\right.
\end{equation}
where $k<\min\{\frac{N}{2}-s,\frac{\beta}{p-1}\}$. Then, there exists a constant $\varepsilon_3>0$ such that for any $\varepsilon\geq\varepsilon_3$, there is no distributional solution $u\in X(T)$ to \eqref{1} for sufficiently large $T>T_b$ with $T_b$ is defined in \eqref{44}. Moreover $T_b$ can be estimated as follows
\begin{equation}\label{}
T_b\leq B_2\,\varepsilon^{-\frac{1}{\alpha\kappa_0}},
\end{equation}
for all $\varepsilon\geq\varepsilon_3$, where $\kappa_0=\frac{1}{p-1}-\frac{k}{\beta}>0$,
$$B_2=(p-1)^{-1/\alpha} D_{N,p,\beta,\lambda,\gamma}^{-1/\alpha}\Gamma(1+\alpha)^{1/\alpha}\,2^{\frac{1}{\alpha\kappa_0}}\,(C_{N,p,\beta,\gamma})^{\frac{k(p-1)}{\alpha\beta\kappa_0}}\,I_2^{-\frac{1}{\alpha\kappa_2}},$$
and
$$I_2:=2^{-N-\beta}\omega_N(N-k)^{-1}.$$
\end{corollary}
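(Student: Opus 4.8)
The plan is to mirror the proof of Corollary \ref{coro2} almost verbatim, the one genuine difference being that the singularity of $u_0$ prescribed by \eqref{56} now sits at the origin ($|x|\le 1$) rather than at infinity. Consequently I will localize the test function on a \emph{small} ball, choosing $R\le 1$, in contrast with the large radius used for the tail singularity in Corollary \ref{coro2}. This reversal is exactly what turns the large-data regime $\varepsilon\ge\varepsilon_3$ into the admissibility of the chosen radius, and it is the one conceptual point of the argument.

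First I would estimate $M_R(0)$ from below for $R\le 1$. Since $u(0,\cdot)=\varepsilon u_0$ and $\phi$ is real-valued, $M_R(0)=\varepsilon\int_{\mathbb{R}^N}\text{Re}(i^\alpha\gamma u_0(x))\phi(x/R)\,dx$. Restricting the integral to $\{|x|\le R\}$, where $|x/R|\le 1$ forces $\phi(x/R)\ge 2^{-N-\beta}$, and invoking \eqref{56} together with $k<N$ (which holds since $k<N/2-s<N$), I obtain $M_R(0)\ge \varepsilon\,2^{-N-\beta}\omega_N\int_0^R r^{N-1-k}\,dr=\varepsilon\,I_2\,R^{N-k}$. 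The essential observation is that the core singularity produces the growth factor $R^{N-k}$ only after one integrates over the $R$-dependent ball $\{|x|\le R\}$; this plays the role of the factor $R^{(N-k)_+}$ appearing in \eqref{45}.

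Next I would feed this into condition \eqref{36}. Writing $N-\frac{\beta}{p-1}=(N-k)-\beta\kappa_0$, I factor out $R^{N-k}$ to get $M_R(0)-C_{N,p,\beta,\gamma}R^{N-\beta/(p-1)}\ge R^{N-k}\left(\varepsilon I_2-C_{N,p,\beta,\gamma}R^{-\beta\kappa_0}\right)$, and then fix $R$ by the balancing relation $C_{N,p,\beta,\gamma}R^{-\beta\kappa_0}=\varepsilon I_2/2$, i.e. $R=(2C_{N,p,\beta,\gamma}/(\varepsilon I_2))^{1/(\beta\kappa_0)}$, so that the bracket equals $\varepsilon I_2/2>0$ and \eqref{36} is verified. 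Because $\beta\kappa_0>0$, this $R$ satisfies $R\le 1$ precisely when $\varepsilon\ge 2C_{N,p,\beta,\gamma}/I_2=:\varepsilon_3$, which is how the threshold $\varepsilon_3$ is pinned down and what makes the lower bound $\phi(x/R)\ge 2^{-N-\beta}$ legitimate.

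Finally, Theorem \ref{theorem6} rules out any $u\in X(T)$ for $T>T_b$, and I would bound $T_b$ via $T_b\le T_U\le (p-1)^{-1/\alpha}\left(R^{N(p-1)}\Gamma(1+\alpha)\,D_{N,p,\beta,\lambda,\gamma}^{-1}\left[M_R(0)-C_{N,p,\beta,\gamma}R^{N-\beta/(p-1)}\right]^{-(p-1)}\right)^{1/\alpha}$, using \eqref{39}, \eqref{44} and $H(p,\alpha)\ge p-1$. Substituting $M_R(0)-C_{N,p,\beta,\gamma}R^{N-\beta/(p-1)}\ge R^{N-k}\varepsilon I_2/2$ reduces $R^{N(p-1)}$ to $R^{k(p-1)}$, and inserting the explicit $R$ collapses the $\varepsilon$-powers to $\varepsilon^{-1/(\alpha\kappa_0)}$ and the constants to $B_2$ (the $\kappa_2$ in the displayed $B_2$ being a misprint for $\kappa_0$). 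The only delicate bookkeeping is the identity $k+\beta\kappa_0=\beta/(p-1)$, which is what forces the exponent of $\varepsilon$ to be exactly $-1/(\alpha\kappa_0)$; everything else is a direct transcription of Corollary \ref{coro2}. The main obstacle, as noted above, is purely conceptual: recognizing that here the radius must \emph{shrink} as the data grows, interchanging the roles of ``large/small $R$'' and ``small/large $\varepsilon$'' relative to Corollary \ref{coro2}.
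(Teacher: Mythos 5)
Your proposal is correct and coincides with the paper's own proof in every essential step: the same lower bound $M_R(0)\geq \varepsilon\,I_2\,R^{N-k}$ over the ball $\{|x|\leq R\}$ with $R\leq 1$, the same balancing choice $R=\left(2C_{N,p,\beta,\gamma}/(\varepsilon I_2)\right)^{1/(\beta\kappa_0)}$ pinning down $\varepsilon_3=2\,I_2^{-1}C_{N,p,\beta,\gamma}$, and the same $T_b\leq T_U$ bound via $H(p,\alpha)\geq p-1$ collapsing to $B_2\,\varepsilon^{-1/(\alpha\kappa_0)}$. You also correctly identify the exponent $\kappa_2$ in the displayed $B_2$ as a misprint for $\kappa_0$.
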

\begin{proof} In order to apply Theorem \ref{theorem6}, we need to estimate $M_R(0)$ from below, for some $R>0$. Let
$$\varepsilon_3=2\,I_2^{-1}C_{N,p,\beta,\gamma}.$$
Let $\varepsilon\geq\varepsilon_3$. We choose $R=R(\varepsilon)\leq 1$. Then, using \eqref{56}, we have
 \begin{eqnarray*}
M_R(0)&\geq&\varepsilon\,\text{Re}\left(\gamma\, i^{\alpha}\int_{\mathbb{R}^N}u_0(x)\phi(x/R)\,dx\right)\\
&\geq& \varepsilon \int_{|x|\leq1}|x|^{-k}\phi(x/R)\,dx\\
&\geq& \varepsilon \int_{|x|\leq R}|x|^{-k}\phi(x/R)\,dx\\
&\geq& \varepsilon 2^{-N-\beta} \int_{|x|\leq R}|x|^{-k}\,dx\\
&=& \varepsilon 2^{-N-\beta} \omega_N\int_0^ R r^{N-1-k}\,dr\\
&=& \varepsilon 2^{-N-\beta} \omega_N(N-k)^{-1}R^{N-k}\\
&=& \varepsilon \,I_2\,R^{N-k}.
\end{eqnarray*}
Therefore
\begin{equation}\label{52}
M_R(0)-C_{N,p,\beta,\gamma}R^{N-\frac{\beta}{p-1}}\geq R^{N-k}\left(\varepsilon\,I_2-C_{N,p,\beta,\gamma}R^{-\beta\kappa_0}\right)= R^{N-k}\left(\frac{\varepsilon\,I_2}{2}\right)>0,
\end{equation}
where 
\begin{equation}\label{51}
R=\left(\frac{2C_{N,p,\beta,\gamma}}{\varepsilon\,I_2}\right)^{\frac{1}{\beta\kappa_0}}.
\end{equation}
It is clear, by our choice of $\varepsilon_3$, that $R\leq 1$. Applying Theorem \ref{theorem6}, we conclude that there is no solution $u\in X(T)$ of \eqref{1} for all $T>T_b$. Moreover, from \eqref{39},\eqref{44}  and $H(p,\alpha)\geq p-1$, we obtain
$$T_b\leq T_U\leq(p-1)^{-1/\alpha}  \left(\frac{R^{N(p-1)}\Gamma(1+\alpha)}{D_{N,p,\beta,\lambda,\gamma}\,\left[M_R(0)-C_{N,p,\beta,\gamma}R^{N-\frac{\beta}{p-1}}\right]^{p-1}}\right)^{1/\alpha}.$$
Then, using \eqref{52} and \eqref{51}, we conclude that
$$T_b\leq B_2\,\varepsilon^{-\frac{1}{\alpha\kappa_0}}.$$
 This complete the proof.
\end{proof}
\begin{rmk}
Note that, by taking $\gamma=\pm 1, \pm i$ in Corollary \ref{coro3},  condition \eqref{56} implies \eqref{17}, which means that  \eqref{56} is more general that  \eqref{17}. Therefore, Theorem \ref{theorem3} can be seen as a particular case of Corollary \ref{coro3}, but with different regularity.
\end{rmk}


\bibliographystyle{elsarticle-num}



\end{document}